%%%%%%%%%%%%
\documentclass[11pt,reqno]{amsart}
\linespread{1.1}
\usepackage{fullpage}
\usepackage{amsmath,amsthm,verbatim,amssymb,amsfonts,amscd,graphicx,bbm}
\usepackage{graphics}
\usepackage{mathtools}
\usepackage{tikz-cd}
\usepackage{mathrsfs}
\usepackage{esint}
\usepackage{color}
\setlength{\parskip}{3pt}

\usepackage{hyperref}

\newtheorem{theorem}{Theorem}
\newtheorem{proposition}[theorem]{Proposition}
\newtheorem{lemma}[theorem]{Lemma}
\newtheorem{corollary}[theorem]{Corollary}

\theoremstyle{definition}
\newtheorem{remark}{Remark}[section]

\newcommand{\cref}[1]{Corollary~\ref{c.#1}}

\numberwithin{equation}{section}
%\numberwithin{figure}{section}
\numberwithin{theorem}{section}

%%%%%%%%%%%%%%
%%%%%%%%%%%%%%
\newcommand{\Z}{\mathbb{Z}}

\newcommand{\R}{\mathbb{R}}

\newcommand{\bT}{\mathbb{T}}

%\newcommand{\tfrac}[2]{{\teststyle\frac{#1}{#2}}}

  % the random variable "constant" in the stochastic estimates 

\renewcommand{\tilde}{\widetilde}
\renewcommand{\div}{\mathrm{div}}
\newcommand{\ol}{\overline}

%%%%%%%%%%%%%%
\newcommand{\eps}{\varepsilon}

%%%%%%%%%%%%%%
%%%%%%%%%%%%%%

\title{On the periodic homogenization of elliptic equations in non-divergence form with large drifts}

\author{Wenjia Jing \and Yiping Zhang}

\address{Yau Mathematical Sciences Center, Tsinghua University, Beijing 100084, and Yanqi Lake Beijing Institute of Mathematical Sciences and Applications, Beijing 101407, P.R. China}
\email{wjjing@tsinghua.edu.cn}

\address{Yanqi Lake Beijing Institute of Mathematical Sciences and Applications, Beijing 101407, and Yau Mathematical Sciences Center, Tsinghua University, Beijing 100084, P.R. China}
\email{zhangyiping161@mails.ucas.ac.cn}

\date{\today}

%%%%%%%%%%%%%%
%%%%%%%%%%%%%%
\begin{document}

\begin{abstract}

We study the quantitative homogenization of linear second order elliptic equations in non-divergence form with highly oscillating periodic diffusion coefficients and with large drifts, in the so-called ``centered'' setting where homogenization occurs and the large drifts contribute to the effective diffusivity. Using the centering condition and the invariant measures associated to the underlying diffusion process, we transform the equation into divergence form with modified diffusion coefficients but without drift. The latter is in the standard setting for which quantitative homogenization results have been developed systematically. An application of those results then yields quantitative estimates, such as the convergence rates and uniform Lipschitz regularity, for equations in non-divergence form with large drifts. 
\smallskip
 
\noindent{\bf Key words}: Periodic homogenization; elliptic equations in non-divergence form; generator of diffusion processes; large drift; convergence rates; uniform regularity in homogenization.

\smallskip

\noindent{\bf Mathematics subject classification (MSC 2010)}: 35B27, 35J08

\end{abstract}

\maketitle

%%%%%%%%%%%%%
%%%%%%%%%%%%%

%%%%%%%%%%%
\section{Introduction and the main results}

In this paper, we investigate the periodic homogenization of linear second order elliptic equations in non-divergence form with large drift. More precisely, for $0<\varepsilon<1$, we consider the following equation
\begin{equation}
\label{eq:heteq}
\left\{
\begin{aligned}
&-{\tilde{a}}_{ij}(\textstyle\frac{x}{\eps}) \partial_i\partial_j u_\varepsilon(x) - \tfrac{1}{\varepsilon}\tilde{b}_i(\tfrac{x}{\eps})\partial_i u_\varepsilon(x) = f(x) \; &&\text{ in }\Omega,\\
&u_\varepsilon(x) = g(x) \; &&\text{ on }\partial\Omega.
\end{aligned}\right.
\end{equation}
Here and below, repeated indices are summed over their range unless otherwise stated, and $\bT^d := \R^d/\Z^d$ denotes the flat torus. $\Omega\subset \R^d$ is an open bounded domain. The following assumptions are imposed, for all $ i,j=1,\dots,d$:
\begin{equation}
\label{eq:Ab0Hypo}
\left\{
\begin{aligned}
  &\text{The domain $\Omega$ is of class $C^{1,1}$};\\% \partial \Omega \text{ is of class }\, C^{1,1};\\
&\widetilde{a}_{ij}=\tilde{a}_{ji}, \; 
\widetilde{a}_{ij} \in C^1(\bT^d), \; \tilde b_i \in L^\infty(\bT^d); \\%\; C(\bT^d)\forall i,j=1,\dots,d;\\ 
&\widetilde{a}_{ij}(x+z) = \widetilde a_{ij}(x), \;\widetilde{b}_i(x+z) = \widetilde b_i(x), \; \forall z \in \Z^d, \forall x\in \R^d;\\ %\, \forall i,j=1,\dots,d;\\
&\exists \, \lambda \in (0,\infty), \, \text{such that} \;  \lambda |\xi|^2 \leq \tilde{a}_{k\ell}(x)\xi_k\xi_\ell, \, \forall x,\xi \in \mathbb{R}^d.%;\\
%&\exists \, \Lambda \in (0,\infty), \, \text{such that} \; \|\tilde a_{ij}\|_{L^\infty} +  \|\partial_\ell \tilde a_{\ell j}\|_{L^\infty} + \|\tilde b_i\|_{L^\infty} \le \Lambda. %\; \forall i,j=1,\dots,d.
\end{aligned}
\right.
\end{equation}
The second line implies $\|\tilde a_{ij}\|_{L^\infty(\bT^d)} +  \|\partial_\ell \tilde a_{\ell j}\|_{L^\infty(\bT^d)} + \|\tilde b_i\|_{L^\infty(\bT^d)} \le \Lambda$ for some $\Lambda \in (0,\infty)$ . Under the above assumptions, the diffusion matrix $\widetilde a$ is symmetric and uniformly elliptic and, together with the drift coefficient $\tilde b$, it is $\Z^d$-periodic and sufficiently regular. To emphasize the periodicity of the coefficients, we view $\tilde a, \tilde b$ as functions on $\bT^d$. The $C^{1,1}$ regularity for $\Omega$ is the standard setting for the Dirichlet problem \eqref{eq:heteq} for elliptic equations in non-divergence form, since it yields a strong solution $u^\eps \in H^2(\Omega)$ for each fixed $\eps$, $f\in L^2(\Omega)$ and $g\in H^2(\Omega)$. For the method of this paper to work out, the regularity assumptions for $\tilde a$ can be relaxed to $\tilde a \in C^{0,\delta}(\bT^d)$ for some $\delta \in (0,1)$. See Remarks \ref{rem:reg} and \ref{rem:abHolder} below. %The $C^{1,1}$ regularity of $\Omega$ guarantees $H^2$ solutions of \eqref{eq:heteq} even for less regular $\tilde a,\tilde b$. We discuss possible relaxation on the regularity of $\Omega$ also in Remark \ref{rem:reg}. 
Less regular domains (e.g., bounded and convex, or piecewise regular Lipschitz domains, such as polyhedrons in $\R^3$) are often used in numerical methods for \eqref{eq:heteq}; see analysis of such domains in \cite{MR3985933,MR3077903,MR4520025} under certain restrictions of $\tilde a$. Our method applies to those settings; see Remark \ref{rem:C11} below. For presentational simplicity, however, we assume \eqref{eq:Ab0Hypo} throughout the paper.

The following notations and conventions are used throughout the paper: $\mathbf{I}_d$ denotes the $d\times d$ identity matrix. A bounding constant in an estimate is called \emph{universal} if it depends only on the quantities $d,\lambda,\Lambda,\Omega$ in \eqref{eq:Ab0Hypo} but is independent of $\eps$, $f$ and $g$. As usual, bounding constants in various lines may change but are denoted by the same notation.

The problem \eqref{eq:heteq} arises naturally when modeling diffusive phenomena in heterogeneous environments. It is the simplest model of this kind, a periodic one, but it reveals important features that are shared by more general situations. Let us define the differential operator
\begin{equation*}
L_{\tilde a, \tilde b}^\eps := -\tilde a_{ij}(\tfrac{x}{\eps})\partial_i \partial_j - \tfrac{1}{\eps} \tilde b_j(\tfrac{x}{\eps})\partial_j.
\end{equation*}
Using the regularity of $\tilde a$ we can rewrite $L^\eps_{\tilde a,\tilde b}$ as
  \begin{equation}
  \label{eq:heteqdiv}
  \begin{aligned}
  &L^\eps_{\tilde a, \tilde b}\,u_\eps = \mathcal{L}^\eps_{\tilde a, \tilde \beta}\, u_\eps := -\partial_i\left(\tilde a_{ij}(\tfrac{x}{\eps})\partial_j u_\eps \right) - \tfrac{1}{\eps}\tilde\beta_j(\tfrac{x}{\eps}) \partial_j u_\eps,\\
  &\text{where}\; \tilde \beta_j(y) := \tilde b_j(y) - \partial_{y_i} \tilde a_{ij}(y). 
  \end{aligned}
\end{equation}
The differential operator $\mathcal{L}^\eps_{\tilde a,\tilde \beta}$ is in divergence form and still with a large drift. 

The differential operator $L^\eps_{\tilde a, \tilde b}$ is the generator of the diffusion process determined by the following stochastic differential equation (SDE):
\begin{equation}
\label{eq:SDE}
\left\{
\begin{aligned}
dX^\eps_t &= \tfrac{1}{\eps} \tilde b(\tfrac{X^\eps_t}{\eps}) dt + \sqrt{2} \sigma(\tfrac{X^\eps_t}{\eps}) dW_t,\\
X^\eps_0 &= x.
\end{aligned}
\right.
\end{equation}
Here, $\sigma(x) = \sqrt{\tilde a(x)}$ is the square root of the positive definite matrix $\tilde a(x)$ and $W_t$ is a standard $d$-dimensional Wiener process. Via a standard change of variable one checks that the law of $X^\eps_t$ is the same to $\eps X_{t/\eps^2}$ where $(X_s)_{s\in \R_+}$ is determined by
\begin{equation*}
dX_s = \tilde b(X_s)ds + \sqrt{2}\sigma(X_s) dW_s, \qquad X_0 = \tfrac{x}{\eps}.
\end{equation*}
Under the periodicity assumptions of the coefficients $\tilde a, \tilde b$, the path $X_s$ can be viewed as living in the torus $\bT^d$. 
In view of this connection between \eqref{eq:heteq} and SDEs, there is a probabilistic approach to study the homogenization problem as done in the seminal work \cite{MR503330} by Bensoussan, Lions and Papanicolaou; see Chapter 3 there. Under proper conditions (see \eqref{eq:bcenter}) on the drift $\tilde b$, the solution $u_\eps$ is known to converge weakly in $H^1(\Omega)$ to the solution $u$ of a homogenized problem with constant diffusion coefficients with no drift. In other words, the original large drift contributes to the effective diffusion in a spatial scale much larger than the periodicity of it. 

This paper is mainly concerned with quantitative aspects of the homogenization of \eqref{eq:heteq}. The authors of \cite{MR503330} obtained an $L^\infty$ convergence rate using the classical two-scale expansion method. However, their method requires higher (than \eqref{eq:Ab0Hypo}) regularities on $\tilde a, \tilde b$ and on $f$ and only treats the case of $g=0$. New ideas and techniques for quantitative homogenization, not only in the periodic but also in the stationary ergodic settings, have been developed in the recent decades. It is natural to check how such advances apply to \eqref{eq:heteq}. We refer to \cite{MR503330,MR1329546}, as representative works, for the classical theory on homogenization, and to \cite{MR3225629,MR3082248,MR3073881,MR3648977,MR4377865,MR4103433,MR4567768,MR3838419} for recent developments with emphases on the quantitative aspects. 

In periodic homogenization there are now standard methods (see e.g.\,\cite{MR3838419}) to obtain (even optimal) convergence rates in $L^p$ and $W^{1,p}$ (with proper correctors) etc., to describe the asymptotic behaviours of the Green functions, and to prove regularity estimates that are uniform in $\eps$. For \eqref{eq:heteq}, in view of the connection to \eqref{eq:SDE}, it is most natural to consider $L^\eps_{\tilde a,\tilde b}$ in non-divergence form when the diffusion coefficient $\tilde a$ is not a constant matrix. However, most works about quantitative homogenization concentrate on equations in divergence form rather than the form of \eqref{eq:heteq};  even when the (weak) differentiability of $\tilde a$ is imposed so that we can rewrite $L^\eps_{\tilde a,\tilde b}$ into divergence form, as in \eqref{eq:heteqdiv}, much fewer results are available when the large drift $\eps^{-1}\tilde\beta(x/\eps)$ is present. Nevertheless, see Allaire and Orive \cite{MR2351401}, Henning and Ohlberger \cite{MR2819498} and Capdeboscq \cite{MR1663726} for qualitative homogenization results in this (and a bit more general) setting. The main objective of this paper is to study quantitative homogenization results when the large drift is present.

When $\tilde{b}=0$ and in the periodic setting, Avellaneda and Lin \cite{MR978702} obtained various uniform regularity estimates for $u_\eps$ (up to the class of $C^{1,1}$ in certain settings) using their influential compactness method. For convergence rates, Guo, Tran and Yu \cite{MR4354730} showed that $O(\eps)$ is the generic optimal rate in $L^\infty$, and they constructed correctors to get an $O(\eps^2)$ rate in $L^\infty$. They also initiated the study of under what conditions the convergence rate is $O(\eps^2)$ without using correctors; see \cite{https://doi.org/10.48550/arxiv.2201.01974} for such studies and see Sprekeler and Tran \cite{MR4308690} for $O(\eps)$ convergence rates in $W^{1,p}$. We refer to \cite{MR3269637,MR3665674} for uniform regularity results in the random setting with short range dependence assumptions, and to \cite{MR4491712,https://doi.org/10.48550/arxiv.2301.01267} for the studies from the random walk in a random environment point of view.

Numerical approximations of the solution to the equation \eqref{eq:heteq} in non-divergence form with highly oscillating coefficients is both challenging and of practical importance, and it inspires much analysis of \eqref{eq:heteq}. We refer to Capdeboscq, Sprekeler and S\"uli \cite{MR4111657}, Henning and Ohlberger \cite{MR2740530} and the references therein for more details.  

In the following, we first review the qualitative theory, and then state our main results on the quantitative homogenization. 

%%%%%%%%%%
\subsection{The qualitative homogenization result}
\label{sec:qual}

Under the conditions in \eqref{eq:Ab0Hypo}, it is known (see \cite[Theorem 3.4 of Chapter 3]{MR503330} and Proposition \ref{prop:invmeas} below) that there exists a unique invariant measure with positive density $m(y) \in C(\bT^d)$ for the diffusion process \eqref{eq:SDE}, and $m$ is the unique weak solution to the equation:
\begin{equation}
\label{eq:invmeas}
-\partial_{y_i}\left[ \partial_{y_j}\left(\tilde{a}_{ij}(y) m(y)\right)-  \tilde{b}_i(y)m(y) \right]=0  \text{ in }\bT^d, \text{ and }
\int_{\bT^d} m(y)\, dy=1.
\end{equation}
Let $\tilde \beta$ be defined as in \eqref{eq:heteqdiv}. The equation for $m$ can also be put in divergence form as
\begin{equation}
\label{eq:invmeas-div}
-\partial_{y_i}\left[\tilde a_{ij}(y) \partial_{y_j} m(y) - \widetilde \beta_i(y)m(y) \right] = 0.
\end{equation}
See Proposition \ref{prop:invmeas} below for more details. In \cite{MR503330} the qualitative homogenization of \eqref{eq:heteq} was established under the following additional condition:
\begin{equation}
\label{eq:bcenter}
\int_{\bT^d}  \tilde{b}_j(y)m(y)\, dy=0, \qquad j=1,\dots,d.
\end{equation}
The above is henceforth referred to as the \emph{centering condition}. Since the existence and uniqueness of $m$ is guaranteed by the assumptions in \eqref{eq:Ab0Hypo}, the problem \eqref{eq:invmeas} with \eqref{eq:bcenter} form an overdetermined system which has a solution only for some special class of drifts. 

In \cite{MR503330}, Bensoussan, Lions and Papanicolaou established the homogenization of \eqref{eq:heteq} using both probabilistic and PDE based analytic methods. In both approaches, the centering condition \eqref{eq:bcenter} is the key assumption. It allows one to solve the following \emph{cell problem} (a central concept for homogenization). More precisely, \eqref{eq:Ab0Hypo} and \eqref{eq:bcenter} guarantee, for each $j=1,\cdots,d$, the unique existence of $\tilde{\chi}^j$ which solves
\begin{equation}
\label{eq:cellprob}
-{\tilde{a}}_{ik}\partial_{y_i}\partial_{y_k} \tilde{\chi}^j(y) - \tilde{b}_i(y)\partial_{y_i} \tilde{\chi}^j(y)=\tilde{b}_j(y) \; \text{ in }\bT^d, \quad \text{and}\quad 
\int_{\bT^d} \tilde{\chi}^j=0.
\end{equation}
The qualitative homogenization result is then as follows.
\begin{theorem}[Theorem 3.5.2 in \cite{MR503330}]
\label{thm:homog}
%Assume $\Omega$ is Lipschitz and that
Suppose that \eqref{eq:Ab0Hypo} and \eqref{eq:bcenter} hold. Define $\ol a = (\ol a_{ij})$ by
\begin{equation}
\label{eq:abar}
\begin{aligned}
\ol a_{ij} := &\int_{\bT^d} [(I + \nabla_y \widetilde\chi)\widetilde a (I+\nabla_y \widetilde\chi)^{\rm T}(y)]_{ij} m(y)\, dy\\
 = &\int_{\bT^d} \left(\widetilde a_{ij} + \widetilde a_{ik}\partial_{y_k}\widetilde\chi^j + \widetilde a_{kj} \partial_{y_k} \widetilde\chi^i  + \widetilde a_{k\ell}  \partial_{y_k} \widetilde\chi^i \partial_{y_\ell} \widetilde\chi^j\right) m(y)\, dy.
 \end{aligned}
\end{equation}
Then $\ol a$ is a constant symmetric matrix, and $\ol a \ge \lambda_1 \mathbf{I}_d$ for some positive constant $\lambda_1 > 0$ that is universal. Moreover, for any $f \in L^2(\Omega)$ and $g\in H^2(\Omega)$ (that is, the Dirichlet datum is the restriction on $\partial \Omega$ of such a function), the solution $u_\eps$ of \eqref{eq:heteq} converges weakly in $H^1(\Omega)$ to the solution $u$ of the homogenized problem
\begin{equation}
\label{eq:homogeq}
\left\{\begin{aligned}
&-\ol a_{ij}\partial_{i}\partial_{j} u = f \quad &&\text{ in }\Omega,\\
&u = g \quad &&\text{ on } \partial \Omega.
\end{aligned}\right.
\end{equation}
\end{theorem}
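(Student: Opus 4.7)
The plan is to transform \eqref{eq:heteq} into a divergence-form equation without drift by multiplying through by the invariant measure $m$, and then to invoke the classical periodic homogenization theory for divergence-form elliptic equations.

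Rewriting \eqref{eq:heteq} as $\mathcal{L}^\eps_{\tilde a,\tilde\beta}u_\eps = f$ from \eqref{eq:heteqdiv} and multiplying by $m(x/\eps)$ (with $m$ as in Proposition \ref{prop:invmeas}), a direct computation yields
\begin{equation*}
m(x/\eps)\,\mathcal{L}^\eps_{\tilde a,\tilde\beta}u_\eps = -\partial_i\bigl(m(x/\eps)\tilde a_{ij}(x/\eps)\partial_j u_\eps\bigr) + \eps^{-1} W_j(x/\eps)\partial_j u_\eps,
\end{equation*}
where $W_j(y) := \tilde a_{ij}\partial_{y_i} m - m\tilde\beta_j = \partial_{y_i}(m\tilde a_{ij}) - m\tilde b_j$. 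Equation \eqref{eq:invmeas-div} gives $\partial_{y_j}W_j = 0$ on $\bT^d$, and the centering condition \eqref{eq:bcenter} (after an integration by parts) gives $\int_{\bT^d} W_j\,dy = 0$. Hence there exists a periodic skew-symmetric matrix $S=(S_{ij})$ with $\partial_{y_i} S_{ij} = W_j$, constructed by solving auxiliary periodic Poisson equations. By skew-symmetry, $\eps^{-1}W_j(x/\eps)\partial_j u_\eps = \partial_i[S_{ij}(x/\eps)\partial_j u_\eps]$, so setting $\hat a(y):= m(y)\tilde a(y) - S(y)$, the function $u_\eps$ satisfies the divergence form equation without drift
\begin{equation*}
-\partial_i\bigl[\hat a_{ij}(x/\eps)\partial_j u_\eps\bigr] = m(x/\eps) f \quad\text{in }\Omega,\qquad u_\eps = g\text{ on }\partial\Omega.
\end{equation*}
The symmetric part of $\hat a$ is $m\tilde a$, so $\hat a$ is uniformly elliptic (since $m\ge c>0$).

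Next, I would apply the classical periodic homogenization theory for divergence-form equations (e.g., via two-scale convergence or Tartar's energy method): $u_\eps\rightharpoonup u$ weakly in $H^1(\Omega)$, with $u$ solving $-\bar{\hat a}_{ij}\partial_i\partial_j u = (\int_{\bT^d} m)f = f$ in $\Omega$ and $u = g$ on $\partial\Omega$ (the Dirichlet data passes by the trace theorem). Only the symmetric part of the constant matrix $\bar{\hat a}$ enters this equation. Applying the same $m$-multiplication and $S$-procedure to the cell problem \eqref{eq:cellprob} shows that $\tilde\chi^j$ satisfies $-\partial_{y_i}[\hat a_{ij} + \hat a_{ik}\partial_{y_k}\tilde\chi^j] = 0$ with zero mean, which is precisely the divergence-form corrector equation for $\hat a$; hence the correctors coincide. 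Testing this weak equation against $\tilde\chi^\ell$ and adding the resulting zero into the standard formula $\bar{\hat a}_{\ell j} = \int_{\bT^d}[\hat a_{\ell j} + \hat a_{\ell k}\partial_{y_k}\tilde\chi^j]\,dy$ produces the two-sided identity
\begin{equation*}
\bar{\hat a}_{\ell j} = \int_{\bT^d}\hat a_{ik}\bigl(\delta_{kj}+\partial_{y_k}\tilde\chi^j\bigr)\bigl(\delta_{i\ell}+\partial_{y_i}\tilde\chi^\ell\bigr)\,dy.
\end{equation*}
Substituting $\hat a = m\tilde a - S$: the $m\tilde a$-contribution is symmetric in $(\ell,j)$ (by symmetry of $\tilde a$) and expands to exactly the four terms in \eqref{eq:abar}, while the $S$-contribution is antisymmetric in $(\ell,j)$ (by skew-symmetry of $S$). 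Symmetrizing thus gives $\tfrac12(\bar{\hat a} + \bar{\hat a}^{\mathrm T}) = \bar a$, so $u$ satisfies \eqref{eq:homogeq}.

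Positive definiteness $\bar a \ge \lambda_1 \mathbf{I}_d$ follows from $\tilde a\ge\lambda\mathbf{I}_d$, $m\ge c>0$, and Jensen's inequality applied to $\int_{\bT^d}|\xi+\nabla_y\tilde\chi^\xi|^2 \,dy \ge |\xi|^2$ (with $\tilde\chi^\xi:=\xi_j\tilde\chi^j$), which yields $\bar a\,\xi\cdot\xi \ge \lambda c|\xi|^2$. The main obstacle I anticipate is the identification step, where the symmetric part of the (non-symmetric) homogenized matrix $\bar{\hat a}$ must be matched with the explicit formula \eqref{eq:abar}. This hinges on showing that the $S$-contribution vanishes entirely under symmetrization in $(\ell,j)$, which in turn relies on the precise coupling $\partial_{y_i}S_{ij} = W_j$ together with the cell-problem structure for $\tilde\chi^j$.
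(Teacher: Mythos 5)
Your proof is correct and follows essentially the same path as the paper's: multiply the equation by the invariant measure $m$ to produce a divergence-free, mean-zero large drift, absorb that drift via a skew-symmetric flux corrector $S$ into a new uniformly elliptic (but non-symmetric) coefficient $\hat a = m\tilde a - S$, which is exactly the paper's $q$, invoke standard periodic homogenization for divergence-form operators, identify the transformed correctors with $\tilde\chi^j$, and conclude by observing that the $S$-contribution to the two-sided formula for $\overline{\hat a}$ is anti-symmetric and drops out upon symmetrization. The only cosmetic differences are that you argue $\ol a \ge \lambda_1 \mathbf{I}_d$ directly from the two-sided formula and Jensen's inequality rather than citing the ellipticity theorem from \cite{MR503330}, and you make the anti-symmetry cancellation in the symmetrization step a bit more explicit than the paper does.
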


This theorem appeared as Theorem 5.2 in Chapter 3 of \cite{MR503330} under stronger assumptions than ours. We reprove this theorem in Section \ref{sec:qualproof}. %using a transformation method which puts the equation \eqref{eq:heteq} into a divergence form \emph{without} any drift. The details of the transformation is given in Section \ref{sec:transformation}.
It is important to note that under the $C^{1,1}$ regularity assumption of $\Omega$, the solution of \eqref{eq:homogeq} actually belongs to $H^2(\Omega)$. 

\begin{remark}\label{rem:1D}
A discussion about the centering condition is now in order. 

If the large drift is not present, i.e., $\tilde b = 0$, then the centering assumption is always satisfied. This is the case in \cite{MR978702,MR4354730,MR4308690,MR3665674}. If the matrix $\tilde a$ is constant and $\nabla\cdot \tilde b = 0$, then $m(y) \equiv 1$ is the invariant measure and the centering condition reduces to the mean-zero condition; detailed studies of such cases can be found in \cite{MR1265233}.  

In the so-called laminated media where the coefficients $\tilde a$ and $\tilde b$ only depend on one coordinate, namely the first one $y_1$ of $y = (y_1,\dots,y_d)$, the invariant measure $m(y)$ is of the form $m(y) = m(y_1)$ and it is determined by
\begin{equation*}
-\partial_{y_1}^2 (\tilde a_{11}(y_1)m(y_1)) + \partial_{y_1} (\tilde b_1(y_1)m(y_1)) = 0, \qquad y_1 \in \bT^1.
\end{equation*}
Note that $\tilde a_{11} > 0$ due to ellipticity. Assuming that $\tilde a, \tilde b$ are sufficiently smooth, the centering condition is then equivalent to
\begin{equation}
\label{eq:1Dcenter}
\int_0^1 \frac{\tilde b_1(s)}{\tilde a_{11}(s)} ds = 0, \; \int_0^1 \frac{\tilde b_j(s)}{\tilde a_{11}(s)} \exp\left(\int_0^s \frac{\tilde b_1(t)}{\tilde a_{11}(t)}\,dt \right)ds = 0, \text{ for } j=2,\dots,d.
\end{equation}
The simpler 1D case is treated in more detail in Section \ref{sec:1D}. We also remark there that one cannot expect to have homogenization in general when the centering condition fails. 
\end{remark}
%%%%%%%%%%%
\subsection{Main results on quantitative estimates}
\label{sec:quant}

Our main results of this paper concern the quantitative estimates for the homogenization of \eqref{eq:heteq}, namely the convergence rates in $L^2$, $L^\infty$ and in $H^1$ and the uniform Lipschitz regularity of $\{u_\eps\}_{\eps\in (0,1)}$.

As is standard, for convergence rates in $H^1$ (in general for $W^{1,p}$) some correctors are needed (add to the limit $u$). Following \cite{MR3082248,MR3838419}, we introduce the so-called \emph{Dirichlet correctors} $\Phi_{\eps,j}$, $j=1,\dots,d$, defined by
\begin{equation}
\label{eq:Dirichletcorrector}
-\partial_i\left(q_{ik}(\tfrac{x}{\eps})\partial_k \Phi_{\eps,j}\right) = 0 \; \text{in } \Omega, \qquad \Phi_{\eps,j} = x_j \; \text{on } \partial \Omega.
\end{equation}
Here, the diffusion matrix $q = (q_{ij})$ is defined later in \eqref{eq:qijdef} and is uniformly elliptic. It is easily verified that the function $\Phi_{\eps,j} - x_j$ belongs to $H^1_0(\Omega)$ and has size of order $O(\eps)$ in $L^\infty$. The size of its gradient, however, is not small in $L^2$. This function corrects $\nabla u_\eps - \nabla u$ to make the latter converge strongly.

The first of our main results is concerned with the quantification of the convergence of $u_\eps$ to $u$ in various functional spaces.
\begin{theorem}
  \label{thm:rates}
  Assume that \eqref{eq:Ab0Hypo} and \eqref{eq:bcenter} hold. Then the following results are true.
  \begin{itemize}
    \item[(1)] There exists a universal constant $C\in (0,\infty)$ such that for any $f\in H^1(\Omega)$ and $g \in H^2(\Omega)$, we have
  \begin{equation}
    \label{eq:L2rate}
    \|u_\varepsilon-u\|_{L^2(\Omega)}\leq C\eps \left(\|f\|_{H^1(\Omega)} + \|g\|_{H^2(\Omega)}\right)
    \end{equation} and
\begin{equation}
    \label{eq:H1rate}
    \|u_\varepsilon-u - \{\Phi_{\eps,j}-x_j\}\partial_j u\|_{H^1_0(\Omega)}\leq C\eps \left(\|f\|_{H^1(\Omega)} + \|g\|_{H^2(\Omega)}\right).
    \end{equation}
  \item[(2)] Let $g=0$ and $p\in (1,d)$, and let $r=dp/(d-p)$. Then there is a constant $C_p \in (0,\infty)$ depending only on the data in \eqref{eq:Ab0Hypo} and on $p$ so that, for any $f\in W^{1,p}(\Omega)$, we have
    \begin{equation}
    \label{eq:Lqrate}
    \|u_\eps - u\|_{L^r(\Omega)} \le C_p\,\eps \|f\|_{W^{1,p}(\Omega)}.
    \end{equation}
  \item[(3)] Let $g=0$ and $p\in (d,\infty)$. Then there is a constant $C_p \in (0,\infty)$ depending only on the data in \eqref{eq:Ab0Hypo} and on $p$ so that, for any $f\in W^{1,p}(\Omega)$, we have
    \begin{equation}
    \label{eq:Linfrate}
    \|u_\eps - u\|_{L^\infty(\Omega)} \le C_p\,\eps \|f\|_{W^{1,p}(\Omega)}.
    \end{equation}
  \end{itemize}
\end{theorem}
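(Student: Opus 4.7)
The plan is to exploit the divergence-form reformulation in \eqref{eq:heteqdiv} together with the invariant measure $m$ to reduce the problem to quantitative homogenization of a divergence-form equation without drift, a setting for which a well-developed theory is available (see e.g.\ \cite{MR3838419}).

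First I would derive the algebraic transformation. Multiplying the equation $\mathcal{L}^\eps_{\tilde a, \tilde \beta} u_\eps = f$ by $m(x/\eps)$ and using symmetry of $\tilde a$, a direct calculation yields
\[
m(x/\eps)\, \mathcal{L}^\eps_{\tilde a, \tilde \beta} u_\eps = -\partial_i\bigl(m(x/\eps)\tilde a_{ij}(x/\eps)\partial_j u_\eps\bigr) + \tfrac{1}{\eps}V_j(x/\eps)\partial_j u_\eps,
\]
where $V_j(y) := \tilde a_{jk}(y)\partial_{y_k}m(y) - \tilde \beta_j(y)m(y)$. By \eqref{eq:invmeas-div} the vector field $V$ is divergence-free on $\bT^d$, and integration by parts combined with \eqref{eq:bcenter} shows $\int_{\bT^d} V\,dy = 0$. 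Hence there exists a periodic antisymmetric stream matrix $S=(S_{ij})$ with $\partial_{y_k} S_{jk} = V_j$, and its antisymmetry turns the drift term into a pure divergence. Setting $q_{ij}(y) := m(y)\tilde a_{ij}(y) + S_{ij}(y)$ gives the equivalent formulation
\[
-\partial_i\bigl(q_{ij}(x/\eps)\partial_j u_\eps\bigr) = m(x/\eps) f \quad \text{in }\Omega, \qquad u_\eps = g \quad \text{on }\partial\Omega,
\]
whose symmetric part $m\tilde a$ is uniformly elliptic. One must also verify, via the cell problem for $q$, that the symmetric part of the homogenized matrix of $q$ coincides with $\overline a$ from \eqref{eq:abar}, so that the effective equation matches \eqref{eq:homogeq}, and that the Dirichlet correctors associated to $q$ are precisely those in \eqref{eq:Dirichletcorrector}.

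To treat the oscillating right-hand side $m(x/\eps)f$, I would split $u_\eps = w_\eps + v_\eps$, where $w_\eps$ solves the transformed equation with right-hand side $f$ and boundary data $g$, and $v_\eps \in H^1_0(\Omega)$ solves the same operator with right-hand side $(m(x/\eps)-1)f$. Since $\int_{\bT^d}(m-1)\,dy = 0$, there is a periodic vector field $\Theta$ with $\partial_{y_i}\Theta_i = m-1$, and thus
\[
(m(x/\eps)-1)f = \eps\,\partial_i\bigl(\Theta_i(x/\eps)f\bigr) - \eps\,\Theta_i(x/\eps)\partial_i f.
\]
A standard energy estimate yields $\|v_\eps\|_{H^1_0(\Omega)} \le C\eps\|f\|_{H^1(\Omega)}$, and for $w_\eps$ the classical $L^2$ and $H^1$ convergence-rate estimates for periodic divergence-form homogenization produce \eqref{eq:L2rate} and \eqref{eq:H1rate}, which together prove part (1).

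For parts (2) and (3) the decomposition is unchanged but $v_\eps$ is now estimated via Avellaneda--Lin-type $\eps$-uniform $W^{1,p}$ bounds for $-\partial_i(q_{ij}(x/\eps)\partial_j\cdot)$ with zero Dirichlet data, giving $\|\nabla v_\eps\|_{L^p(\Omega)} \le C_p\,\eps\,\|f\|_{W^{1,p}(\Omega)}$; Sobolev embedding then yields the $L^r$ bound in \eqref{eq:Lqrate} for $p<d$ and the $L^\infty$ bound in \eqref{eq:Linfrate} for $p>d$. The corresponding rates for $w_\eps$ (with $g=0$) are known results on $L^r$ and $L^\infty$ convergence with $W^{1,p}$ data for divergence-form periodic homogenization. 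The main obstacle is the first step: producing the transformation cleanly and confirming that $q$ inherits enough regularity (via $m$ and $S$) for the divergence-form quantitative machinery to apply, and that its effective tensor has symmetric part equal to $\overline a$. Once this identification is in place, the three rate estimates follow from careful but standard applications of the existing theory to the transformed equation with a small oscillating right-hand side correction.
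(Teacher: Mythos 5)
Your proposal is correct and follows essentially the same route as the paper: weight by the invariant measure $m$ to make the drift divergence-free and mean-zero, absorb it into the diffusion matrix via an antisymmetric stream tensor to obtain $q$, apply the standard divergence-form quantitative theory, and treat the oscillating factor $m(x/\eps)$ in the right-hand side by writing $m-1$ as a divergence (you use a vector potential $\Theta$, the paper uses $\Theta=\nabla h$ with $\Delta h = m-1$, which is the same thing) and paying an $\eps$ together with one derivative of $f$. The only step you flag but do not carry out -- checking that the symmetric part of $\overline q$ equals $\overline a$ -- is exactly what the paper verifies in its proof of Theorem~\ref{thm:homog}, so the plan is complete.
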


We also have the following uniform (in $\eps$) Lipschitz regularity for the solutions $\{u_\eps\}_\eps$ to \eqref{eq:heteq}.

\begin{theorem}
  \label{thm:Lipreg}
  Assume that \eqref{eq:Ab0Hypo} and \eqref{eq:bcenter} hold. Let $p>d$ and $\eta \in (0,1)$ be fixed numbers. Then there exists a constant $C_{p,\eta} \in (0,\infty)$ depending only on the data in \eqref{eq:Ab0Hypo} and on $p,\eta$, so that, for any $f\in L^p(\Omega)$ and $g\in C^{1,\eta}(\partial \Omega)$, we have 
\begin{equation}
  \label{eq:Lipreg}
  \|\nabla u_\varepsilon \|_{L^\infty (\Omega)} \leq C_{p,\eta}\,\{\|g\|_{C^{1,\eta}(\partial \Omega)} + \|f\|_{L^p(\Omega)}\}.
\end{equation}
%In particular the above holds for $f\in H^1$ when $d=2,3$.
\end{theorem}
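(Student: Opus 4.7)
The plan is to execute the central transformation advertised throughout the introduction and then reduce the theorem to a classical Avellaneda--Lin uniform Lipschitz estimate for divergence-form periodic equations. Concretely, I would show that, after multiplication by the scaled invariant density $m(x/\eps)$, the non-divergence problem \eqref{eq:heteq} with large drift is equivalent to the drift-free divergence problem
\[
-\partial_i\bigl(q_{ij}(x/\eps)\,\partial_j u_\eps\bigr) = m(x/\eps)\,f \quad\text{in } \Omega, \qquad u_\eps = g \quad \text{on }\partial\Omega,
\]
where $q_{ij}$ is the periodic matrix introduced in \eqref{eq:qijdef}, and then quote the standard periodic Lipschitz estimate for this transformed equation.

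To justify this rewriting, I would start from the divergence-form expression \eqref{eq:heteqdiv} for $L^\eps_{\tilde a,\tilde b}$, multiply by $m(x/\eps)$, and integrate by parts against $\tilde a_{ij}(x/\eps)\partial_j u_\eps$ to obtain
\[
m(x/\eps)\,\mathcal{L}^\eps_{\tilde a,\tilde\beta}\,u_\eps = -\partial_i\bigl(m\tilde a_{ij}(x/\eps)\partial_j u_\eps\bigr) + \eps^{-1} V_j(x/\eps)\,\partial_j u_\eps,
\]
where, using $\tilde a_{ij}=\tilde a_{ji}$, the remainder is $V_j(y) := \tilde a_{jk}(y)\partial_k m(y) - m(y)\tilde\beta_j(y)$. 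By \eqref{eq:invmeas-div}, $V$ is divergence-free on $\bT^d$; and a short integration by parts using the centering condition \eqref{eq:bcenter} together with the symmetry of $\tilde a$ shows $\int_{\bT^d} V\,dy = 0$. Hence $V$ admits an antisymmetric vector potential: there exists a $\Z^d$-periodic tensor $S_{ij}$ with $S_{ij} = -S_{ji}$ such that $V_j = \partial_i S_{ji}$ (for instance, solve $-\Delta \phi_i = V_i$ on $\bT^d$ and set $S_{ij} := \partial_i\phi_j - \partial_j\phi_i$). The antisymmetry of $S$ combined with the symmetry of $\partial_i\partial_j u_\eps$ then converts the remaining first-order term into a pure divergence,
\[
\eps^{-1} V_j(x/\eps)\,\partial_j u_\eps = \partial_i\bigl(S_{ji}(x/\eps)\partial_j u_\eps\bigr),
\]
which produces the claimed identity with $q_{ij}(y) := m(y)\tilde a_{ij}(y) - S_{ji}(y)$. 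The symmetric part of $q$ is $m\tilde a$, which is uniformly elliptic because $m\in C(\bT^d)$ is strictly positive (as recalled before \eqref{eq:invmeas}), so $q$ defines a uniformly elliptic (non-symmetric) periodic operator.

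Once \eqref{eq:heteq} is reduced to $-\partial_i(q_{ij}(x/\eps)\partial_j u_\eps) = m(x/\eps) f$ with $u_\eps|_{\partial\Omega}=g$ on the $C^{1,1}$ domain $\Omega$, I would invoke the periodic Avellaneda--Lin uniform Lipschitz estimate (as in \cite{MR978702}, or in its formulation covering non-symmetric coefficients in \cite{MR3838419}) to conclude
\[
\|\nabla u_\eps\|_{L^\infty(\Omega)} \le C_{p,\eta}\Bigl(\|m(\cdot/\eps)f\|_{L^p(\Omega)} + \|g\|_{C^{1,\eta}(\partial\Omega)}\Bigr),
\]
and then use $\|m\|_{L^\infty(\bT^d)} \le C$ to absorb $m(\cdot/\eps)$ in the first term and arrive at \eqref{eq:Lipreg}. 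The main technical obstacle is to verify that $q$ has the Hölder regularity required by the compactness-based Lipschitz estimate. Under \eqref{eq:Ab0Hypo} the invariant measure equation \eqref{eq:invmeas-div} has a $C^1$ leading coefficient and bounded drift, so De Giorgi--Nash--Moser regularity yields $m \in C^{0,\alpha}(\bT^d)$ for some $\alpha \in (0,1)$, while Meyers' theorem promotes $\nabla m$ to $L^p$ for some $p>2$; elliptic regularity for the Poisson problem defining $\phi$ then transfers to Hölder regularity of $S$, so that $q \in C^{0,\delta}(\bT^d)$ for some $\delta>0$, which is exactly the hypothesis under which the Avellaneda--Lin machinery applies.
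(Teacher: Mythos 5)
Your proposal reproduces the paper's own strategy: weight the non-divergence equation by the invariant density $m(x/\eps)$, rewrite the resulting large drift as the divergence of an antisymmetric periodic tensor using the centering condition, arrive at the drift-free divergence-form problem $-\partial_i(q_{ij}(x/\eps)\partial_j u_\eps)=m(x/\eps)f$, and then quote the Avellaneda--Lin/Shen uniform Lipschitz estimate and absorb $\|m\|_{L^\infty}$. In fact, the vector field you call $V$ equals $-\beta$ from \eqref{eq:betavec}, your $S$ is the tensor $\phi$ of Lemma~\ref{lem:flux} up to a sign, and the transformed coefficient you obtain, $q_{ij}=m\tilde a_{ij}-S_{ji}$, is exactly $q_{ij}$ in \eqref{eq:qijdef}. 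The paper has already carried out this transformation once and for all in Section~\ref{sec:transformation}, so its proof of the theorem is a two-line application of Theorem~5.6.2 of \cite{MR3838419} to \eqref{eq:heteq2}; redoing the construction inside the proof, as you do, is redundant but not wrong in spirit.

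The one place where your argument is genuinely shaky is the regularity of $q$. The compactness scheme behind the uniform Lipschitz estimate requires $q\in C^{0,\delta}(\bT^d)$ for some $\delta>0$, and you try to get this by saying Meyers' theorem gives $\nabla m\in L^p$ for some $p>2$ and then elliptic regularity for the Poisson problem defining $\phi$ transfers this to H\"older regularity of $S$. That chain breaks for $d\ge 3$: Meyers only guarantees some $p>2$, and $W^{2,p}$ regularity of $\phi$ yields $S=\nabla\phi-(\nabla\phi)^{\rm T}\in C^{0,\delta}$ only if $p>d$, which Meyers does not deliver. The paper avoids this by never differentiating $m$: it writes $\beta_j = m\tilde b_j - \partial_i(m\tilde a_{ij})$, keeps $\partial_i(m\tilde a_{ij})$ as the distributional derivative of the $C^{0,\alpha}$ function $a_{ij}=m\tilde a_{ij}$, solves $\Delta f^j=\beta_j$ viewed as a divergence-form Poisson problem with $L^\infty$ source and $C^{0,\alpha}$ flux, and concludes $f^j\in C^{1,\alpha}$ by Schauder theory, whence $\phi\in C^{0,\alpha}$ (this is Lemma~\ref{lem:flux}). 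Alternatively, you could repair your step without changing its structure by observing that under \eqref{eq:Ab0Hypo} the leading coefficient $\tilde a$ is $C^1$ and the source $\tilde\beta m$ is bounded, so Calder\'on--Zygmund/$W^{1,p}$ theory for \eqref{eq:invmeas-div} gives $m\in W^{1,p}(\bT^d)$ for every $p<\infty$, in particular $p>d$, after which your embedding argument goes through. Either fix closes the gap; as written, the regularity justification is incomplete in dimensions $d\ge 3$.
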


\smallskip

Item (3) of Theorem \ref{thm:rates} recovers the $L^\infty$ convergence rate as in Theorem 5.1 of Chapter 3 in \cite{MR503330}. All other results concerning the convergence rates and the uniform Lipschitz estimate above are new in the setting of this paper. Note also, in view of the uniform Lipschitz regularity of $\{u_\eps\}_\eps$, we also recover the convergence of $u_\eps$ to $u$ in $C(\ol\Omega)$ (with same rate as \eqref{eq:Linfrate}). The qualitative convergence in $C(\ol\Omega)$ was established in Theorem 4.5 of Chapter 3 in \cite{MR503330} using probabilistic methods. Moreover, the $L^\infty$ rate in \cite{MR503330} was established under higher regularity assumptions on $\tilde a, \tilde b$ and for $f\in W^{3,p}(\Omega)$ with $p>d$. Our result, hence, is an improvement. 

On the other hand, all those quantitative results look almost identical to the corresponding results recently developed for periodic homogenization of elliptic equations in divergence form without any drifts, namely in \cite{MR3225629,MR3073881,MR3838419}, except for the higher regularity requirement in $f$ in Theorem \ref{thm:rates}. In fact, the main contribution of this paper is the observation that, under the centering condition \eqref{eq:bcenter}, \eqref{eq:heteq} can be transformed into an elliptic equation with periodic and uniformly elliptic coefficients without any drift; see \eqref{eq:heteq2} below. This allows us to use the recent quantitative homogenization results in the more standard setting directly to get the results above. It will be clear from the proof that the higher regularity requirement on $f$ is also due to this transformation. 

%%%%%%%%%%

\begin{remark}\label{rem:reg}
We finish the introduction by several remarks.

The key transformation that puts \eqref{eq:heteq} into a divergence form equation without drift is carried out in detail in the next section. For $\tilde b = 0$, such a transformation was used already by Avellaneda and Lin \cite{MR978702}. We show in this paper that it works for more general $\tilde b$ that satisfies the centering condition. If this last condition fails, the homogenization is more involved and one cannot expect \eqref{eq:homogeq} as the effective model; see Section \ref{sec:1D} and the works in \cite{MR2819498,MR2324490,MR1663726}. 

The Lipschitz class is the sharp space for uniform regularity of the solutions to \eqref{eq:heteq}. In general, we cannot expect to obtain uniform regularity in $C^{1,r}(\Omega)$ for $r>0$; see discussions in Section \ref{sec:1D}. This is a clear contrast with the case of $\tilde b = 0$. For the latter setting, uniform $C^{1,1}$ a priori estimates were established by Avellaneda and Lin in \cite{MR978702}.  

It is possible to relax the regularity assumptions in \eqref{eq:Ab0Hypo}. As long as the regularity for $\tilde{a}$ is concerned, the key is to have the existence and uniqueness of the invariant measure $m$ and to make sure that $m\in C^{0,\alpha}(\bT^d)$ for some $\alpha \in (0,1)$. Hence, it is enough to require $\tilde a \in W^{1,p}(\bT^d)$ (for $\tilde b$, it suffices to impose $\tilde b\in L^p(\bT^d)$ for $p>d$ large enough). Under those assumptions, the existence and regularity of $m$ was studied systematically in \cite{MR4308690}. We also note that the proof about $m$ in this paper (see Proposition \ref{prop:invmeas} below) is essentially the one in \cite{MR503330} and it can be carried out under the quite weak regularity assumption $\tilde{a} \in C^{0,\delta}(\bT^d)$, $\beta\in (0,1)$ and $\tilde b\in L^\infty(\bT^d)$; see Remark \ref{rem:abHolder}. This assumption on $\tilde a$ is more or less optimal. %As long as the regularity of $\Omega$ is considered, the key is to have $H^2(\Omega)$ solutions for both equations in non-divergence form and in divergence form. For this, $C^{1,1}$ regularity for $\Omega$ is the standard assumption. In many situations (such as in numerical methods) piecewise regular domains are of interests \cite{MR3077903,MR3985933}, and we discuss such cases in Remark \ref{rem:C11}.

Last but not least, the quantitative results selected in Theorems \ref{thm:rates} and \ref{thm:Lipreg} are just a few representatives. Various other results (e.g.\,$W^{1,p}$ rate, Neumann boundary conditions, etc.) in \cite{MR3838419} can also be considered here.
\end{remark}

The rest of the paper is organized as follows. In the next section we use the key transformation to put \eqref{eq:heteq} into a divergence form equation without any drift. The resulting equation is in the standard form for which the recently developed quantitative homogenization results apply. We then apply those results to prove the main theorems of this paper in Section \ref{sec:proofs}. In sections \ref{sec:1D} and \ref{sec:conclusions} we comment on the centering conditions for the drifts, provide some examples and further discussions.

%%%%%%%
%%%%%%%
\section{The key transformation}
\label{sec:transformation}

In this section we transform \eqref{eq:heteq} into an equation in divergence form without any drift term. In the case of $\tilde b = 0$, this transformation was already used in Avellaneda and Lin \cite{MR978702}. It consists of two steps as follows. 

\subsection{Weighting by the invariant measure}

In the first step, we weight the equation \eqref{eq:heteq} by the invariant measure $m$ and change the equation into divergence form with a large drift that is mean-zero and divergence free. Note that, without this weighting, the drift $\tilde \beta$ in \eqref{eq:heteqdiv} does not have those properties.

First, the invariant measure $m$ in \eqref{eq:invmeas} is well defined with the following important properties.
\begin{proposition}\label{prop:invmeas} Under the assumptions in \eqref{eq:Ab0Hypo}, the equation \eqref{eq:invmeas} admits a unique weak solution $m \in H^1(\bT^d)$. Moreover, we can find $\alpha \in (0,1)$ and $C\in (1,\infty)$, both of which are universal, such that $m\in C^{0,\alpha}(\bT^d)$ and
\begin{equation}
\label{eq:mbound}
\|m\|_{C^{0,\alpha}(\bT^d)} + \|m\|_{H^1(\bT^d)} \le C, \quad \inf_{y\in \bT^d} m(y) \ge C^{-1}. 
\end{equation}
\end{proposition}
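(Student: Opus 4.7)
The plan is to combine Fredholm theory on the compact manifold $\bT^d$ with the strong maximum principle to produce a nontrivial signed kernel element of $L^*$, and then invoke standard divergence-form elliptic regularity for the quantitative bounds. The approach essentially follows Chapter 3 of \cite{MR503330}.

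First, I would set up the Fredholm picture and identify the kernel of $L u := \tilde a_{ij}\partial_i\partial_j u + \tilde b_i\partial_i u$. The $C^1$ regularity of $\tilde a$ in \eqref{eq:Ab0Hypo} lets us rewrite $L^*$ in the divergence form \eqref{eq:invmeas-div} with bounded drift $\tilde\beta = \tilde b - \nabla\cdot\tilde a$. Since $\bT^d$ has no boundary, $L^* + \mu I$ is coercive on $H^1(\bT^d)$ for $\mu$ large by a G\aa rding estimate that absorbs the drift, and compactness of $H^1(\bT^d) \hookrightarrow L^2(\bT^d)$ makes both $L$ and $L^*$ Fredholm of index zero. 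Constants lie in $\ker L$ since $L$ has no zeroth-order term; conversely, any $u\in\ker L\cap H^2(\bT^d)$ attains its extrema on the compact torus, and at an interior maximum $\tilde a_{ij}\partial_i\partial_j u \le 0$ by uniform ellipticity while $\tilde b_i\partial_i u = 0$, so Hopf's strong maximum principle forces $u$ to be constant. Hence $\dim\ker L = \dim\ker L^* = 1$; let $m$ generate $\ker L^*$.

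Next I would show $\int_{\bT^d} m \ne 0$ and upgrade $m$ to a strictly positive function after normalization. The same maximum-principle argument shows $Lu = 1$ admits no $H^2(\bT^d)$ solution (at a maximum $Lu \le 0$), so by Fredholm duality $\int m \ne 0$ and we rescale so $\int m = 1$. If $m$ still changed sign, I could pick $\phi \in C^\infty(\bT^d)$ with $\phi \ge 0$ and $\phi$ supported in $\{m < 0\}$; the Fredholm constant $c := \int \phi m$ would then be strictly negative, the equation $Lu = \phi - c$ would be solvable with right-hand side strictly positive everywhere, and the maximum principle would again yield the contradiction $Lu(x_0) \le 0$ at a maximum point $x_0$. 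Hence $m$ has constant sign and, since $\int m = 1 > 0$, $m \ge 0$. Uniqueness of the normalized solution is immediate: any two such differ by a zero-mean element of $\ker L^*$, hence by zero.

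Finally, for the quantitative bounds I would invoke De Giorgi--Nash--Moser on \eqref{eq:invmeas-div}, whose coefficients lie in $L^\infty(\bT^d)$ with no zeroth-order term, to obtain $m\in C^{0,\alpha}(\bT^d)$ with a universal $\alpha\in(0,1)$ and a universal $C^{0,\alpha}$ bound; testing \eqref{eq:invmeas-div} against $m$ itself and absorbing $\|\tilde\beta\|_{L^\infty}\|m\|_{L^2}\|\nabla m\|_{L^2}$ via Young's inequality (using the just-established $L^\infty$ bound) yields the $H^1$ bound; and Moser's Harnack inequality applied to the non-negative solution $m$ then produces both $m > 0$ pointwise and the universal lower bound $\inf_{\bT^d} m \ge C^{-1}$. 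The main conceptual obstacle is the passage from a one-dimensional $\ker L^*$ to an invariant probability measure of definite sign: this requires the two Fredholm-duality arguments (against the constant $1$ and against a test function supported where $m$ would be negative), each closed by the strong maximum principle on the torus. Once strict positivity and the $L^\infty$ bound are in hand, the remaining regularity and $H^1$ estimates are routine consequences of the standard elliptic toolbox on a compact manifold.
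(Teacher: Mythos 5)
Your proof follows essentially the same Fredholm--alternative / maximum--principle strategy as the paper's (which refers to Chapter~3 of \cite{MR503330}): rewrite in divergence form, show $\ker L$ consists of constants, deduce $\dim\ker L^*=1$, normalize $m$ and check its sign, then run De Giorgi--Nash--Moser and Harnack for \eqref{eq:mbound}. In fact you spell out more than the paper does --- the duality step showing $\int_{\bT^d} m\neq 0$ and the sign-change contradiction via $Lu=\phi-c$ --- which the paper simply defers to Theorem~3.4 of Chapter~3 of \cite{MR503330}.

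One technical point needs tightening. You invoke Hopf's strong maximum principle at an interior maximum for $u\in\ker L\cap H^2(\bT^d)$, arguing pointwise that $\tilde a_{ij}\partial_i\partial_j u(x_0)\le 0$ and $\tilde b_i\partial_i u(x_0)=0$. Under \eqref{eq:Ab0Hypo} (only $\tilde a\in C^1$, $\tilde b\in L^\infty$), an $H^1$ kernel element is in $W^{2,p}(\bT^d)$ for all $p<\infty$, but not necessarily $C^2$, so the classical pointwise argument and Hopf's lemma do not apply directly. You must either use the maximum principle for $W^{2,p}_{\mathrm{loc}}$ strong solutions with $p>d$ (Bony/Alexandrov--Bakelman--Pucci), or stay in the divergence-form framework \eqref{eq:invmeas-div} and apply the weak strong-maximum principle for $H^1$ solutions (Theorem~8.19 of \cite{MR1814364}), as the paper does after upgrading $z$ to $C^{0,\alpha}$ by De Giorgi so that one may shift $z>0$. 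The same remark applies to your $Lu=\phi-c$ step. Also, choosing a smooth $\phi\ge 0$ supported in $\{m<0\}$ tacitly uses the continuity of $m$ (so that the set is open); the $C^{0,\alpha}$ regularity of $m$ should therefore be established before, not after, the positivity argument. With these bookkeeping fixes, the proof is correct and essentially the one in the paper.
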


\begin{proof} The existence and uniqueness of $m\in H^1(\bT^d)$ that solves \eqref{eq:invmeas} follows essentially from the proof of Theorem  3.4 in Chapter 3 of \cite{MR503330}. The proof there is based on a Fredholm alternative argument and the key is to show that if $z\in H^1(\bT^d)$ solves
	\begin{equation*}
	  -\partial_{y_i}\left(\tilde a_{ij}(y) \partial_{y_j} z(y)\right) -\tilde \beta_i(y)\partial_{y_i} z(y) = 0 \quad \text{in $\bT^d = \R^d/\Z^d$}, 
	\end{equation*}
then $z$ must be a constant. In \cite{MR503330} this was proved for $\tilde a\in C^1\cap W^{2,\infty}(\bT^d)$ and $\tilde b\in C^1(\bT^d)$ with the by-product that $m\in W^{2,p}(\bT^d)$ for some $p>2$. Inspecting the proof, however, we see that the above holds under the assumption \eqref{eq:Ab0Hypo}. Indeed, by elliptic regularity  (e.g.\,Theorem 8.24 of \cite{MR1814364}) $z\in C^{0,\alpha}(\bT^d)$ for some $\alpha \in (0,1)$ and hence we can assume $z> 0$. Then we can conclude using the strong maximum principle (e.g.\,Theorem 8.19 of \cite{MR1814364}) for the equation above. Once we solve \eqref{eq:invmeas-div} for $m\in H^1(\bT^d)$, using elliptic regularity again we get $m\in C^{0,\alpha}(\bT^d)$ with the desired estimates. The existence of a positive lower bound was established, again, in \cite{MR503330}. 
\end{proof}

\begin{remark}\label{rem:abHolder} %Based on the result by Sj\"ogren, we could reduce the regularity assumptions in \eqref{eq:Ab0Hypo} to $\tilde a,\tilde b\in C^{0,\delta}(\bT^d)$ for some $\beta \in (0,1)$. In \cite{MR346173} Sj\"ogren showed that under this relaxed assumptions a solution $m$ of \eqref{eq:invmeas} has H\"older regularity; moreover, he showed also that the Fredholm alternative holds for the forward problem
%	\begin{equation*}
%	-\tilde a_{ij}(y)\partial_{y_i}\partial_{y_j} z(y) - \tilde b_i(y)\partial_{y_i} z(y) = f
%  \end{equation*}
%  and the adjoint problem
%  \begin{equation*}
%	-\partial_{y_i}\partial_{y_j} \left(\tilde a_{ij}(y) m(y)\right) + \partial_{y_i}\left(\tilde b_i(y) m(y)\right) = g,
%  \end{equation*}
%  with right hand sides $f\in C^{0,\delta}$ and $g\in L^q$ for $q>d/2$. Those results together with the strong maximum principle of the forward equation would allow us to carry out the proof of Proposition \ref{prop:invmeas} under the relaxed regularity assumptions. 
  The above proof is essentially from Bensoussan, Lions and Papanicolaou \cite{MR533346}. Using the more recent solvability and regularity theory of the double divergence form equation \eqref{eq:invmeas} by Bogachev and Shaposhnikov \cite[Corollary 3.7 and Theorem 3.1]{MR3694737} (and see also the work of Sj\"ogren \cite{MR346173}), one can still obtain a positive and H\"older continuous invariant measure $m$ after weakening the regularity of the diffusion matrix to $\tilde a\in C^{0,\delta}(\bT^d)$ for some $\delta \in (0,1)$. Then, although the modified drift $\tilde\beta$ defined in \eqref{eq:invmeas-div} is no longer in $L^\infty$, it is the sum of a bounded term with a weak divergence of a $C^{0,\alpha}$ term, and all the elliptic PDE tools can still be used. We omit further discussions on this.
  
In the case of $\tilde b=0$, Sprekeler \cite{sprekeler2023homogenization} obtained existence and uniqueness of a non-negative invariant measure $m\in L^2(\bT^d)$ associated to a uniformly elliptic diffusion matrix $\tilde a \in L^\infty(\bT^d)$ (and under the Cordes condition if $d\ge 3$). Among quite a few other interesting results, he used $m$ and the transformation method of \cite{MR978702} to establish homogenization of \eqref{eq:heteq} (weakly in $H^2$ and strongly in $H^1$) for $\Omega$ that is bounded and convex. 
\end{remark}

We put weights on the coefficients and the right hand side of \eqref{eq:heteq} and, for $y\in \bT^d$ and $x\in \Omega$, define:
\begin{equation}
\label{eq:abtrans}
a_{ij}(y) = \widetilde a_{ij}(y) m(y), \; b_j (y) = \widetilde b_j (y) m(y), \;  f_\eps(x) = f(x)m(\tfrac{x}{\eps}).
\end{equation}
Then problem \eqref{eq:heteq} can be rewritten as
\begin{equation}
\label{eq:heteq1}
\left\{\begin{aligned}
&-\partial_i\left({a}_{ij}(\tfrac{x}{\eps}) \partial_{j}u_\varepsilon\right)
- \tfrac{1}{\varepsilon}{\beta}_i(\tfrac{x}{\eps}) 
\partial_i u_\varepsilon = f_\varepsilon &&\text{ in }\Omega,\\
&u_\varepsilon=g &&\text{ on }\partial\Omega.
\end{aligned}\right.
\end{equation}
Here the periodic vector field $\beta = (\beta_j)$, $j = 1,\dots,d$, is defined by 
\begin{equation}
\label{eq:betavec}
{\beta}_j (y) = b_j(y) - \partial_{y_i}{a}_{ij}(y)  = \widetilde b_j(y) m(y) - \partial_{y_i} \left(\widetilde a_{ij}(y) m(y)\right), \qquad y\in \bT^d.
\end{equation}
In view of the regularity properties of $\tilde a, \tilde b$ and $m$, we check that $b_j\in L^\infty(\bT^d)$ and $a_{ij} \in C^{0,\alpha}(\bT^d)$ for all $i,j = 1,\dots,d$. 
Moreover, by \eqref{eq:invmeas} we obtain the following key property for $\beta$:
\begin{equation}
\label{eq:betakey}
\partial_{y_i}{\beta}_i=0,\qquad \int_{\bT^d} {\beta}_i(y)dy=0.
\end{equation}

We point out that in the PDE method for qualitative homogenization in \cite{MR503330}, the authors there started from \eqref{eq:heteq1} and adapted the usual energy method by paying extra attention to the large drift term. In particular, a formal two-scale expansion suggests one to consider the following cell problem: for each $j=1,\dots,d$,
\begin{equation}
\label{eq:celleq1}
-\partial_{y_k}(a_{k\ell}(y)\partial_{y_\ell} \chi^j(y)) - \beta_\ell(y)\partial_{y_\ell} \chi^j(y) = \partial_{y_k} a_{kj}(y) + \beta_j(y)  \text{ in }  \bT^d, \quad \int_{\bT^d} \chi^j = 0.
\end{equation}
The fact that the drift term satisfies \eqref{eq:betakey} is crucial, since it guarantees the unique solvability of \eqref{eq:celleq1}. See Chapter 3 of \cite{MR503330} for details.  

\begin{remark}\label{rem:quant1} Let us also point out the following: One could continue the PDE approach of \cite{MR503330} and quantify the homogenization of \eqref{eq:heteq1} directly, by adapting the recently developed methods for quantitative results (e.g.\,in \cite{MR3838419}) for divergence form and by carefully tracking the effects of the large drift. Roughly speaking, it suffices to replace the cell problems in the standard setting (see \cite{MR3838419}) by \eqref{eq:celleq1}. 
\end{remark}

%%%%%%%
\subsection{A further transformation for the drift term}
\label{sec:fluxtran}

We take a much simpler approach than the one outlined in the remark above. This is the second step of the key transformation. 

Recall that due to the centering condition of $\tilde b$, the large drift in the resulted equation \eqref{eq:heteq1} is mean zero and divergence free. The following then holds.
\begin{lemma}
\label{lem:flux}
Assume \eqref{eq:Ab0Hypo} and \eqref{eq:bcenter}. Let $\beta$ be as in \eqref{eq:betavec} and $\alpha \in (0,1)$ as in Proposition \ref{prop:invmeas}. There exists an anti-symmetric 2-tensor $\phi = (\phi_{ij})$ and a universal constant $C\in (1,\infty)$ so that $\phi_{ij} \in C^{0,\alpha}(\bT^d)$, and for all $k,j=1,\dots,d$, the following holds:
\begin{equation}
\label{eq:phi2tensor}
\beta_j =\partial_{y_\ell}\phi_{\ell j},\ \phi_{kj}=-\phi_{jk}, \quad \int_{\bT^d} \phi_{kj}(y)\,dy=0,\, \text{and } \, 
\|\phi_{kj}\|_{C^{0,\alpha}(\bT^d)} \leq C. %\|\beta_j\|_{L^p(Y)}.
\end{equation}
%Moreover, if $p>d$, then $\|\phi_{ik}\|_{L^\infty(\bT^d)}\leq C$.
\end{lemma}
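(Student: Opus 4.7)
The plan is to construct $\phi$ as the skew-symmetric part of the gradient of a vector-valued potential obtained by inverting the Laplacian on the torus against each component of $\beta$. Concretely, for each $j=1,\dots,d$, I would solve
\begin{equation*}
-\Delta_y \psi_j = \beta_j \quad \text{on } \bT^d, \qquad \int_{\bT^d} \psi_j(y)\, dy = 0.
\end{equation*}
This is solvable in the weak sense precisely because of the mean-zero part of \eqref{eq:betakey}. Then I would define
\begin{equation*}
\phi_{\ell j}(y) := \partial_{y_j} \psi_\ell(y) - \partial_{y_\ell}\psi_j(y),
\end{equation*}
which is manifestly anti-symmetric in $(\ell, j)$ and has zero mean on $\bT^d$ by periodicity. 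The key computation is
\begin{equation*}
\partial_{y_\ell}\phi_{\ell j} = \partial_{y_j}(\partial_{y_\ell}\psi_\ell) - \Delta_y \psi_j = \partial_{y_j}(\partial_{y_\ell}\psi_\ell) + \beta_j,
\end{equation*}
so everything reduces to showing $\partial_{y_\ell}\psi_\ell \equiv 0$. For this, I would take $\partial_{y_j}$ of the defining equation of $\psi_j$ and sum in $j$: since $-\Delta_y(\partial_{y_\ell}\psi_\ell) = \partial_{y_\ell}\beta_\ell$, the divergence-free part of \eqref{eq:betakey} makes $\partial_{y_\ell}\psi_\ell$ harmonic on $\bT^d$, and its mean vanishes by periodicity of each $\psi_\ell$, so it is identically zero.

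The main obstacle is the $C^{0,\alpha}$ regularity claim for $\phi_{\ell j}$, since this requires $\nabla_y\psi_j$ to be H\"older continuous, while $\beta_j$ is only a distribution of the form $\tilde b_j m - \partial_{y_i}(\tilde a_{ij}m)$. I would handle this by writing the equation for $\psi_j$ in ``flux form'', namely
\begin{equation*}
-\Delta_y \psi_j = \tilde b_j\, m - \partial_{y_i}(\tilde a_{ij}\, m),
\end{equation*}
and invoking standard Schauder / Calder\'on-Zygmund estimates for the Laplacian with right-hand side in divergence form: the ``flux'' $-\tilde a_{ij} m$ lies in $C^{0,\alpha}(\bT^d)$ by Proposition \ref{prop:invmeas} together with the assumption $\tilde a_{ij}\in C^1(\bT^d)$, while the lower-order source $\tilde b_j m \in L^\infty(\bT^d)$. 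This yields $\psi_j \in C^{1,\alpha}(\bT^d)$ (possibly after decreasing $\alpha$ slightly, since the $L^\infty$ piece of the right-hand side is embedded into $C^{0,\alpha'}$ via a $W^{1,p}$ detour for large $p$), with a quantitative bound controlled by universal constants.

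Once both $\partial_{y_j}\psi_\ell$ and $\partial_{y_\ell}\psi_j$ are in $C^{0,\alpha}(\bT^d)$, the four desired properties in \eqref{eq:phi2tensor} (representation of $\beta_j$ as a divergence, anti-symmetry, zero mean, and the H\"older bound) follow immediately from the construction. The only subtle point beyond the standard template is the justification of $\partial_{y_\ell}\psi_\ell = 0$, but this is a direct consequence of \eqref{eq:betakey} as explained above, and there are no further technical hurdles.
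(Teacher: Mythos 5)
Your construction is exactly the paper's: you solve a Poisson problem on $\bT^d$ for a vector potential (your $\psi_j$ is $-f^j$ in the paper's notation), anti-symmetrize its gradient to get $\phi$, reduce $\partial_{y_\ell}\phi_{\ell j}=\beta_j$ to the vanishing of the potential's divergence, and prove that vanishing by noting the divergence is harmonic with zero mean because $\div\,\beta=0$. The regularity step is also the same (splitting $\beta_j$ into its $L^\infty$ and divergence-form parts and invoking Calder\'on--Zygmund and Schauder estimates), and your worry about having to shrink $\alpha$ is unnecessary since the $W^{2,p}\hookrightarrow C^{1,1-d/p}$ gain from the $L^\infty$ piece beats any fixed $\alpha<1$ for $p$ large.
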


Results of this type play important roles in homogenization theory and they were present in classical books like \cite{MR503330,MR1195131}; see also \cite[Section 3.1]{MR3838419}. We provide some details of the proof below for the sake of completeness. 

\begin{proof} For each $j=1,\dots,d$, solve the Poisson problem
\begin{equation*}
\Delta_y f^j = \beta_j =  b_j - \partial_{y_i} a_{ij} \quad \text{ in }\, \bT^d, \qquad \int_{\bT^d} f^j(y)\, dy = 0.
\end{equation*} 
Since $b_j\in L^\infty(\bT^d)$ and $a_{ij} \in C^{0,\alpha}(\bT^d)$, by elliptic regularity theory we get $f^j \in C^{1,\alpha}(\bT^d)$. Let
\begin{equation*}
\phi_{ij} := \partial_{y_i} f^j - \partial_{y_j} f^i.
\end{equation*}
Then $\phi_{ij} \in C^{0,\alpha}(\bT^d)$ and it clearly satisfies $\phi_{ij} = -\phi_{ji}$. The identity $\partial_{y_i} \phi_{ij}(y) = \beta_j$, which is equivalent to $\partial_{y_i}\partial_{y_j} f^i(y) = 0$, can be checked by verifying that $\Delta_y (\div f) = 0$. The latter follows from the equations of $(f^j)$ and the fact that $\div\, \beta = 0$. The estimate in \eqref{eq:phi2tensor} follows from the definitions of $\beta,\phi$ and from the bounds in \eqref{eq:Ab0Hypo} and \eqref{eq:mbound}.
\end{proof}

The second step of our key transformation is carried out as follows. Define
\begin{equation}
\label{eq:qijdef}
q_{ij}(y) = a_{ij}(y) + \phi_{ij}(y) = \widetilde a_{ij}(y) m(y) + \phi_{ij}(y), \qquad y\in \bT^d, \, i,j=1,\dots,d.
\end{equation}
The following is a direct consequence of the previous lemma.
\begin{corollary} \label{cor:q}
The diffusion matrix $q = (q_{ij})$ is in $C^{0,\alpha}(\bT^d)$ and is uniformly elliptic, and there exist universal constants $\lambda_1,\Lambda_1 \in (0,\infty)$ so that 
\begin{equation}
\label{eq:qelliptic}
\|q_{ij}\|_{L^\infty(\bT^d)} \le \Lambda_1, \quad q_{ij}(y)\xi^i\xi^j \ge \lambda_1 |\xi|^2, \; \forall y\in \bT^d, \,\forall \xi \in \R^d.
\end{equation}
Moreover, the problem \eqref{eq:heteq1} can be rewritten as
\begin{equation}
\label{eq:heteq2}
\left\{\begin{aligned}
&-\partial_i\left(q_{ij}(\tfrac{x}{\eps}) \partial_{j}u_\varepsilon\right)  = f_\varepsilon &&\text{ in }\Omega,\\
&u_\varepsilon=g &&\text{ on }\partial\Omega.
\end{aligned}\right.
\end{equation}
\end{corollary}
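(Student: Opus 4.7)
\textbf{Proof proposal for Corollary \ref{cor:q}.} The plan is to handle the two claims separately: first the regularity and ellipticity of $q$, then the algebraic rewriting of the equation. Both are essentially direct consequences of the structural properties already established for $m$ in Proposition \ref{prop:invmeas} and for $\phi$ in Lemma \ref{lem:flux}, together with the antisymmetry of $\phi$.

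For the regularity bound, I would note that $q_{ij} = \widetilde a_{ij} m + \phi_{ij}$ is a sum of two $C^{0,\alpha}(\bT^d)$ functions: $\widetilde a_{ij} \in C^1(\bT^d)$ by \eqref{eq:Ab0Hypo}, $m\in C^{0,\alpha}(\bT^d)$ by Proposition \ref{prop:invmeas}, and $\phi_{ij}\in C^{0,\alpha}(\bT^d)$ by Lemma \ref{lem:flux}, and all norms are bounded by universal constants. Hence $\|q_{ij}\|_{L^\infty(\bT^d)} \le \Lambda_1$ for some universal $\Lambda_1$. For the lower bound, the antisymmetry $\phi_{ij} = -\phi_{ji}$ gives $\phi_{ij}(y)\xi^i\xi^j = 0$ for every $\xi\in\R^d$, so
\begin{equation*}
q_{ij}(y)\xi^i\xi^j = \widetilde a_{ij}(y) m(y)\xi^i\xi^j \ge \lambda\, m(y) |\xi|^2 \ge \lambda C^{-1} |\xi|^2,
\end{equation*}
using the ellipticity of $\widetilde a$ and the positive lower bound $m \ge C^{-1}$ from \eqref{eq:mbound}. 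Setting $\lambda_1 := \lambda/C$ gives the stated ellipticity.

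For the rewriting of \eqref{eq:heteq1} as \eqref{eq:heteq2}, the key identity to establish is
\begin{equation*}
\partial_i\!\left(\phi_{ij}(\tfrac{x}{\eps})\partial_j u_\eps\right) = \tfrac{1}{\eps}\beta_j(\tfrac{x}{\eps})\partial_j u_\eps.
\end{equation*}
Expanding the derivative via the chain rule yields two terms: the first, $\frac{1}{\eps}(\partial_{y_i}\phi_{ij})(x/\eps)\partial_j u_\eps$, equals $\frac{1}{\eps}\beta_j(x/\eps)\partial_j u_\eps$ by \eqref{eq:phi2tensor}; the second, $\phi_{ij}(x/\eps)\partial_i\partial_j u_\eps$, vanishes because the contraction of the antisymmetric tensor $\phi_{ij}$ with the symmetric Hessian $\partial_i\partial_j u_\eps$ is zero. (Here $u_\eps$ is a strong $H^2$ solution, so the mixed partials exist and commute a.e.) Substituting this identity into \eqref{eq:heteq1} absorbs the drift term into the divergence and yields
\begin{equation*}
-\partial_i\!\left((a_{ij}+\phi_{ij})(\tfrac{x}{\eps}) \partial_j u_\eps\right) = -\partial_i\!\left(q_{ij}(\tfrac{x}{\eps}) \partial_j u_\eps\right) = f_\eps,
\end{equation*}
which is exactly \eqref{eq:heteq2}. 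The boundary condition is unchanged.

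There is no real obstacle here: the whole content of the transformation is encoded in the antisymmetry of $\phi$ (which kills the Hessian contraction) and the relation $\partial_{y_i}\phi_{ij} = \beta_j$ (which was the defining property of $\phi$ constructed in Lemma \ref{lem:flux}). The only point requiring a line of justification is the regularity of $u_\eps$ needed to make sense of the pointwise identities, but this is immediate from the strong $H^2$ solvability of \eqref{eq:heteq} under \eqref{eq:Ab0Hypo}.
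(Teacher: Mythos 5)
Your argument is correct and follows essentially the same route as the paper: the ellipticity bound comes from the antisymmetry of $\phi$ killing the quadratic form together with the positive lower bound on $m$, and the rewriting of \eqref{eq:heteq1} as \eqref{eq:heteq2} uses $\partial_{y_i}\phi_{ij}=\beta_j$ plus the antisymmetry of $\phi$ contracted against the symmetric Hessian of the $H^2$ solution $u_\eps$. The only stylistic difference is that the paper verifies the identity in weak form, i.e., against test functions $\varphi\in C^\infty_c(\Omega)$, which is slightly cleaner given that $\phi_{ij}\in C^{0,\alpha}(\bT^d)$ (not necessarily classically differentiable) and $\beta_j\in L^\infty$; your pointwise Leibniz expansion is the same computation carried out before testing, and is justified at the stated regularity, but if you wanted to be maximally careful about where the product rule for $\partial_i(\phi_{ij}(x/\eps)\partial_j u_\eps)$ lives, phrasing it distributionally as the paper does removes any doubt.
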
 
Let us point out that this second step of transformation was used in \cite{MR1663726} and is now standard.

\begin{proof} In view of the regularity of $\phi_{ij}$, the bounds and the positivity of $m$, and the anti-symmetry of $(\phi_{ij})$, we check that
\begin{equation*}
\lambda_1 := \lambda\,\inf_{\bT^d} m, \quad \Lambda_1 := \max_{i,j}\|\phi_{ij}\|_{L^\infty} + \Lambda \, \max_{\bT^d} m.
\end{equation*}
works for \eqref{eq:qelliptic}. To check the equivalence between \eqref{eq:heteq1} and \eqref{eq:heteq2} it suffices to verify
\begin{equation}
\label{eq:cor:heteq12}
-\int_\Omega \left[\tfrac{1}{\eps} \beta_j (\tfrac{x}{\eps})\partial_j u_\eps(x)\right] \varphi(x)\, dx = \int_\Omega \phi_{ij}(\tfrac{x}{\eps}) \partial_j u_\eps(x) \partial_i \varphi(x)\, dx, \qquad \forall \varphi \in C^\infty_c(\Omega).
\end{equation}
To this end, using the relation 
\begin{equation*}
\partial_\ell [\phi_{\ell j}(\tfrac{x}{\eps})] = (\eps^{-1} \partial_\ell \phi_{\ell j})(\tfrac{x}{\eps}) = \eps^{-1}\beta_j(\tfrac{x}{\eps})
\end{equation*}
we can compute the left hand side of \eqref{eq:cor:heteq12} as follows:
\begin{equation*}
\begin{aligned}
&-\int_\Omega [\partial_\ell(\phi_{\ell j}(\tfrac{x}{\eps}))]\varphi(x)\partial_j u_\eps(x)\,dx = \int_\Omega \phi_{\ell j}(\tfrac{x}{\eps}) \partial_\ell \left[\varphi \partial_j u_\eps\right]\,dx\\ 
= &\int_\Omega \phi_{\ell j}(\tfrac{x}{\eps}) (\partial_\ell \varphi) \partial_j u_\eps\,dx + \int_\Omega \varphi(x) \phi_{\ell j}(\tfrac{x}{\eps})\partial_{j}\partial_{\ell } u_\eps(x)\, dx.
\end{aligned}
\end{equation*}
The last term in the second line vanishes because $\phi$ is anti-symmetric (note that $u_\eps \in H^2(\Omega)$ for each $\eps \in (0,1)$ and $f\in L^2(\Omega)$). This verifies \eqref{eq:cor:heteq12} and completes the proof of the corollary.
\end{proof}

To summarize, we have transformed \eqref{eq:heteq} into the elliptic equation \eqref{eq:heteq2} which is in divergence form and without any drift term. Moreover, the unscaled diffusion matrix $q = (q_{ij})$ is $\Z^d$-periodic, uniformly elliptic and belongs to $C^{0,\alpha}(\bT^d)$. In other words, \eqref{eq:heteq2} is in the standard setting for quantitative periodic homogenization results; see \cite{MR3838419}. Let us emphasize that this two-step transformation was already used in Avellaneda and Lin \cite{MR978702} when $\tilde b = 0$. We provide above the details of the transformation for non-zero $\tilde b$ that satisfies the centering condition \eqref{eq:bcenter}.

%%%%%%%%
%%%%%%%%
\section{Proofs of the main results}
\label{sec:proofs}

With the preparations in the last section, we can apply the now well known quantitative homogenization results to \eqref{eq:heteq2} and prove the main theorems of the paper.

\subsection{The qualitative convergence result}
\label{sec:qualproof}

First, we reprove Theorem \ref{thm:homog} using the equivalence between \eqref{eq:heteq} and \eqref{eq:heteq2}. For the latter equation, since $(q_{ij})$ is $\Z^d$-periodic and satisfies \eqref{eq:qelliptic}, and
\begin{equation*}
f_\eps(x) = f(x)m\left(\tfrac{x}{\eps}\right) \rightharpoonup f(x)\int_{\bT^d} m(y)\, dy = f(x) \qquad \text{in } \, L^2(\Omega),
\end{equation*}
by the standard qualitative homogenization theory, we know that $u_\eps$ converges weakly in $H^1(\Omega)$ to the unique solution of 
\begin{equation}
  \label{eq:homog-q}
\left\{
    \begin{aligned}
	  &-\partial_{i}\left(\ol q_{ij} \partial_{j} u\right) = f, \quad &&\text{in } \Omega,\\
    &u = g \quad &&\text{on } \partial \Omega.
    \end{aligned}
\right.
\end{equation}
Here, the homogenized diffusion matrix $\ol q = (\ol q_{ij})$ is given by (see e.g. \cite{MR503330,MR1329546}):
\begin{equation*}
%\ol q_{ij} = \int_{\bT^d} \left[q(I+\nabla_y \chi)^{\rm T}\right]_{ij} (y)\, dy = \int_{\bT^d} q_{ij}(y) + q_{ik} \partial_{y_k} \chi^j(y)\, dy
\ol q = \int_{\bT^d} (I+\nabla_y \chi(y)) q(y) (I+\nabla_y \chi(y))^{\rm T}\, dy
\end{equation*}
where $\chi = (\chi^1,\dots,\chi^d)$, and, for each $j=1,\dots,d$, $\chi^j \in H^1(\bT^d)$ is the solution of the corresponding cell problem
\begin{equation}
\label{eq:celleq2}
-\partial_{y_i}(q_{ik}(y) (\partial_{y_k} \chi^j + \delta_{kj})) = 0 \; \text{in } \bT^d, \quad \int_{\bT^d} \chi^j = 0.
\end{equation}
Note that the matrix $\ol q$ is not necessarily symmetric since $q$ is not. Under the $C^{1,1}$ regularity of $\Omega$, the unique $H^1$ solution $u$ of the homogenized equation \eqref{eq:homog-q} in fact belongs to $H^2(\Omega)$, and hence the homogenized equation can be written in non-divergence form and the diffusion matrix can be symmetrized. In other words, \eqref{eq:homog-q} can be rewritten as
\begin{equation}
  \label{eq:homeq-sym}
  \left\{
    \begin{aligned}
	  &-\overline{q}^{\rm sym}_{ij} \partial_{i} \partial_{j} u = f, \quad &&\text{in } \Omega,\\
	  &u = g \quad &&\text{on } \partial \Omega,
    \end{aligned}
  \right.
\end{equation}
where $\overline{q}^{\rm sym}$ is the symmetrization of $q$. Since $a=m\tilde a$ is precisely the symmetric part of $q$, we have
\begin{equation}
  \label{eq:qbar}
  \ol q^{\rm sym} = \int_{\bT^d} (I+\nabla_y \chi(y)) a(y) (I+\nabla_y \chi(y))^{\rm T}\, dy.
\end{equation}
 
It suffices to check $\ol a$ defined in \eqref{eq:abar} agrees with $\ol q^{\rm sym}$. To this end, we prove that the solution $\chi^j$ of \eqref{eq:celleq2}, $j=1,\dots,d$, coincides with the solution $\widetilde{\chi}^j$ of \eqref{eq:cellprob}. Multiply on both sides of \eqref{eq:cellprob} by $m$, we see that $\tilde{\chi}^j$ solves
\begin{equation*}
  -a_{ik}(y)\partial_{y_i}\partial_{y_k} \tilde{\chi}^j(y) - b_i(y)\partial_{y_i} \tilde{\chi}^j(y) = b_j(y), \qquad \text{in } \bT^d.  
\end{equation*}
Note that by elliptic regularity (we only need $\tilde a$ to be H\"older and $\tilde b$ be bounded), $\tilde{\chi}^j \in H^2(\bT^d)$, and hence we also have
\begin{equation*}
  -q_{ik}(y)\partial_{y_i}\partial_{y_k} \tilde{\chi}^j(y) - b_i(y)\partial_{y_i} \tilde{\chi}^j(y) = b_j(y), \qquad \text{in } \bT^d.  
\end{equation*}
In view of the weak formulations of the cell problems \eqref{eq:celleq2} in this section, the definition of $(q_{ij})$, the anti-symmetry of $(\phi_{ij})$, the $H^2$-regularity of the $\tilde{\chi}^j$'s again, and the relation $\partial_{y_i}q_{ij} = \partial_{y_i}a_{ij} + \beta_j = b_j$ (see \eqref{eq:betavec}), we verify that $\tilde{\chi}^j$ satisfies 
\begin{equation*}
  -\partial_{y_i}(q_{ik}(y) \partial_{y_k} \tilde{\chi}^j)  = b_j(y).
\end{equation*}
In view of the relation in \eqref{eq:betavec} again, we see that the above equation is just a rewriting of \eqref{eq:celleq2}. By the uniqueness of the solution to the cell problem, we conclude that $\widetilde{\chi}^j = \chi^j$, for all $j=1,\dots,d$. Now by comparing \eqref{eq:abar} with \eqref{eq:qbar} we conclude that $\ol a = \overline{q}^{\rm sym}$. This reproves the qualitative homogenization result in Theorem \ref{thm:homog}. 

It remains to prove $\ol a \ge \lambda_1 \mathbf{I}_d$ where $\lambda_1$ is defined in the proof of Corollary \ref{cor:q}. For the homogenization of \eqref{eq:heteq2-1} in divergence form, due to the ellipticity condition \eqref{eq:qelliptic} for $(q_{ij})$, we can apply Theorem 3.2 in Chapter 1 of \cite{MR503330} which says the homogenized matrix $(\ol q_{ij})$ satisfies $\ol q \ge \lambda_1 \mathbf{I}_d$. That is, the homogenized diffusion matrix enjoys the same ellipticity bound. Since $\ol a = \overline{q}^{\rm sym}$, $a$ has the same lower ellipticity bound.

\begin{remark}\label{rem:C11} The $C^{1,1}$ regularity of $\Omega$ was used above to show that the homogenized equation \eqref{eq:homog-q} for \eqref{eq:heteq2} can be written in non-divergence form. In numerical methods, piecewise regular domains such as polyhedral ones (in $\R^3$) are often used but they fail to be $C^{1,1}$. For convex Lipschitz domains, Smears and S\"uli \cite{MR3077903} %and Kawecki \cite{MR3985933} 
established solvability in $H^2(\Omega)$ of the equation in non-divergence form (if $d\ge 3$, under the additional assumption that $\tilde a$ satisfies Cordes condition); see also \cite{MR4520025}. Theorem \ref{thm:homog} and item (1) of Theorem \ref{thm:rates} continue to hold in those settings. Other convergence rates and regularity results in Theorem \ref{thm:rates} and in Theorem \ref{thm:Lipreg} require $C^{1,\beta}$ regularity for $\Omega$, even for the homogenization of elliptic equations in divergence form.
\end{remark}
%%%%%%%
\subsection{Convergence rates}

We need to quantify the homogenization of \eqref{eq:heteq2}. Since the right hand side is $f_\eps = f(x)m(\tfrac{x}{\eps})$ which depends on $\eps$, we introduce another problem:
\begin{equation}
 \label{eq:heteq2-1}
 \left\{
 \begin{aligned}
 &-\partial_i\left(q_{ij}(\tfrac{x}{\eps})\partial_j v_\eps\right)(x) = f(x) \quad &&\text{ in } \Omega,\\
 &v_\eps(x) = g(x) \quad &&\text{ on } \partial \Omega.
 \end{aligned}
 \right.
 \end{equation} 
Then standard homogenization theory shows that $v_\eps \to u$ weakly in $H^1(\Omega)$ as $\eps \to 0$. Moreover, since the right hand side above is fixed for all $\eps$, the convergence rate is quantified by standard theory, namely, by Corollary 7.1.3 of \cite{MR3838419}. 

We also need to estimate the difference $u_\eps-v_\eps$. It satisfies the Dirichlet problem
\begin{equation}
\label{eq:uvdiff}
-\partial_i\left( q_{ik}(\tfrac{x}{\eps}) \partial_k (u_\eps -v_\eps)\right) = f(x)[m(\tfrac{x}{\eps})-1] \; \text{ in } \Omega, \quad u_\eps - v_\eps = 0 \; \text{ on } \partial \Omega.
\end{equation}
We use the trick in the proof of Lemma \ref{lem:flux} again. Since $m(y)-1$ is mean zero in $\bT^d$, there exists a unique function $h$ so that
\begin{equation}
\label{eq:uv-h}
\Delta_y h(y) = m(y) -1 \;\text{in }\,\bT^d, \qquad \int_{\bT^d} h(y)\,dy = 0.
\end{equation}
Since $m\in C^{0,\alpha}(\bT^d)$, by the standard elliptic regularity theory $h \in C^{2,\alpha}(\bT^d)$. We then have the following results.

\begin{lemma}\label{lem:fepsH-1} Assume \eqref{eq:Ab0Hypo} and \eqref{eq:bcenter}. There is a universal constant $C\in (1,\infty)$ so that for all $f\in H^1(\Omega)$
\begin{equation}
\label{eq:fepsH-1}
\|f(x)[m(\tfrac{x}{\eps})-1]\|_{H^{-1}(\Omega)} \le C\eps\,\|f\|_{H^1(\Omega)},
\end{equation}
and
\begin{equation}
\label{eq:uvepsbdd}
\|u_\eps - v_\eps\|_{H^1_0(\Omega)} \le C\eps\|f\|_{H^1(\Omega)}.
\end{equation}
\end{lemma}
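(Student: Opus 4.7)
\textbf{Proof proposal for Lemma~\ref{lem:fepsH-1}.} The plan is to prove \eqref{eq:fepsH-1} first by exploiting the auxiliary function $h$ in \eqref{eq:uv-h}, and then deduce \eqref{eq:uvepsbdd} by a standard energy estimate applied to the equation \eqref{eq:uvdiff}.

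For the $H^{-1}$-bound \eqref{eq:fepsH-1}, the key idea is that $m(y)-1$ has zero mean on $\bT^d$, so by the elliptic regularity theory cited right before the statement, the function $h$ in \eqref{eq:uv-h} belongs to $C^{2,\alpha}(\bT^d)$; in particular, $H_i := \partial_{y_i} h$ is in $L^\infty(\bT^d)$ with a universal bound. Thus we have the identity
\begin{equation*}
m(\tfrac{x}{\eps})-1 = (\Delta_y h)(\tfrac{x}{\eps}) = \eps\,\partial_{x_i}\!\left[H_i(\tfrac{x}{\eps})\right].
\end{equation*}
First I would test $f(x)[m(\tfrac{x}{\eps})-1]$ against $\varphi\in C^\infty_c(\Omega)$, use the above identity, and integrate by parts. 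The boundary term vanishes because $\varphi$ has compact support, yielding
\begin{equation*}
\int_\Omega f(x)[m(\tfrac{x}{\eps})-1]\varphi(x)\,dx = -\eps\int_\Omega H_i(\tfrac{x}{\eps})\left[\varphi(x)\,\partial_i f(x) + f(x)\,\partial_i\varphi(x)\right]dx.
\end{equation*}
Using the $L^\infty$ bound on $H_i$ and Cauchy--Schwarz, the right-hand side is bounded by $C\eps\,\|f\|_{H^1(\Omega)}\|\varphi\|_{H^1_0(\Omega)}$. A standard density argument then extends the estimate to all $\varphi\in H^1_0(\Omega)$, which gives \eqref{eq:fepsH-1}.

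For \eqref{eq:uvepsbdd}, I would apply an energy estimate to \eqref{eq:uvdiff}. Testing the equation against $u_\eps - v_\eps \in H^1_0(\Omega)$ and using the antisymmetry of $(\phi_{ij})$ to eliminate the contribution of the skew-symmetric part of $q$, the bilinear form reduces to $\int_\Omega a_{ik}(\tfrac{x}{\eps})\,\partial_k(u_\eps-v_\eps)\,\partial_i(u_\eps-v_\eps)\,dx$, which by \eqref{eq:qelliptic} (equivalently, by $\inf m > 0$ and the ellipticity of $\tilde a$) is bounded below by $\lambda_1\|\nabla(u_\eps-v_\eps)\|_{L^2(\Omega)}^2$. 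The right-hand side is at most $\|f(\,\cdot\,)[m(\tfrac{\cdot}{\eps})-1]\|_{H^{-1}(\Omega)}\|u_\eps-v_\eps\|_{H^1_0(\Omega)}$. Combining this with \eqref{eq:fepsH-1} and the Poincar\'e inequality on the bounded domain $\Omega$ yields \eqref{eq:uvepsbdd}.

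The only subtle point is the careful use of the antisymmetry of $\phi$ inside the energy identity, which is analogous to the computation carried out in the proof of Corollary~\ref{cor:q}; apart from this, both estimates are routine consequences of the representation $m-1 = \Delta_y h$ and the Lax--Milgram framework for \eqref{eq:heteq2}. I do not anticipate a significant obstacle.
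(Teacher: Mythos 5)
Your proposal is correct and follows essentially the same route as the paper: you represent $m(\tfrac{x}{\eps})-1$ as $\eps\,\partial_{x_i}[(\partial_{y_i}h)(\tfrac{x}{\eps})]$ via the Poisson problem \eqref{eq:uv-h}, integrate by parts against a test function, use the uniform $C^{2,\alpha}$ bound on $h$ to control $\nabla h$ in $L^\infty$, and then obtain \eqref{eq:uvepsbdd} by the energy estimate for \eqref{eq:uvdiff}. Your extra remarks on using the antisymmetry of $\phi_{ij}$ to show coercivity of the bilinear form are a correct unpacking of what the paper tacitly calls the ``standard energy estimate,'' but the argument is the same.
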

\begin{proof} It suffices to prove there exists a universal constant $C < \infty$ so that
\begin{equation}
\label{eq:fH-1-key}
\left| \int_\Omega f(x) [m(\tfrac{x}{\eps})-1] \varphi(x)\, dx\right| \le C\eps \|f\|_{H^1(\Omega)}\|\nabla \varphi\|_{L^2(\Omega)}, \qquad \forall \varphi \in C^\infty_c(\Omega).
\end{equation}
Using the function $h$ defined earlier, the integral on the left hand side above can be written as
\begin{equation*}
\eps^2 \int_\Omega f(x)\varphi(x) \Delta_x\left(h(\tfrac{x}{\eps})\right)\,dx = \eps \int_\Omega f\varphi \partial_\ell[(\partial_\ell h)(\tfrac{x}{\eps})] = -\eps\int_\Omega (\partial_\ell h)(\tfrac{x}{\eps})[\varphi \partial_\ell f + f\partial_\ell \varphi].
\end{equation*}
For \eqref{eq:uv-h} we use the $C^{2,\alpha}$ elliptic regularity estimate and \eqref{eq:mbound} to get a uniform bound on $\|\nabla h\|_{L^\infty}$. Then \eqref{eq:fH-1-key} follows, and \eqref{eq:fepsH-1} is proved.

Finally, using the standard energy estimate
\begin{equation*}
  \|u_\eps -v_\eps\|_{H^1_0(\Omega)} \le C\|f[m(\tfrac{x}{\eps}) - 1]\|_{H^{-1}(\Omega)}
\end{equation*}  
for \eqref{eq:uvdiff}, we get \eqref{eq:uvepsbdd}.
\end{proof}

\begin{proof}[Proof of Theorem \ref{thm:rates}] The $L^r$, $W^{1,p}$, $W^{-1,p}$ and $H^2$ norms below are all for the domain $\Omega$. We hence omit writing $\Omega$ explicitly.

\emph{Proof of (1)}: In view of the uniform ellipticity and the regularity of $q$ in \eqref{eq:qelliptic}, and by Corollary 7.1.3 of \cite{MR3838419}, for $f\in L^2$ and $g\in H^2$, we get
\begin{equation*}
 \|v_\eps - u\|_{L^2} + \|v_\eps - u - \{\Phi_{\eps,j}(x)-x_j\}\partial_j u\|_{H^1_0} \le C\eps \|u\|_{H^2} \le C\eps \left(\|f\|_{L^2} + \|g\|_{H^{2}}\right)
\end{equation*} 
for some universal constant $C < \infty$. Under the further assumption that $f\in H^1(\Omega)$, we can combine the estimate above with \eqref{eq:uvepsbdd} to obtain \eqref{eq:H1rate} and \eqref{eq:L2rate} of Theorem \ref{thm:rates}.

\emph{Proof of (2)}: Now $g=0$ and $p\in (1,d)$. Let $1/r = 1/p-1/d$, let $v_\eps$ solve \eqref{eq:heteq2-1} with $g=0$. Apply Theorem 7.5.1 of \cite{MR3838419} to this equation, we first get  $\|v_\eps - u\|_{L^r} \le C_r \eps\|f\|_{L^p}$ for some $C_r$ that only depends on the data in \eqref{eq:Ab0Hypo} and on $r$. The difference $u_\eps - v_\eps$ is still characterized by \eqref{eq:uvdiff}. Apply the uniform $W^{1,p}$ regularity result in Theorem 5.3.1 of \cite{MR3838419} associated to the operator $-\partial_i(q_{ij}(x/\eps)\partial_j)$ (note that $q$ is H\"older and hence VMO), for some constant $C_p$ depending on the data in \eqref{eq:Ab0Hypo} and on $p$, we have
\begin{equation}
\label{eq:uvdiffW}
\|u_\eps -v_\eps\|_{W^{1,p}} \le C_p\,\|f(x)[m(\tfrac{x}{\eps})-1]\|_{W^{-1,p}} \le C\eps\,\|f\|_{W^{1,p}}.
\end{equation}
The last inequality is obtained by repeating the argument in Lemma \ref{lem:fepsH-1}. By Sobolev embedding, %we also get $\|u_\eps - v_\eps\|_{L^r} \le C\eps\,\|f\|_{W^{1,p}}$. 
the above still holds if the left hand side is changed to $\|u_\eps - v_\eps\|_{L^r}$. Combine all the results above we obtain \eqref{eq:Lqrate}. 

\emph{Proof of (3)}: The proof is almost the same as above. Apply Theorem 7.5.1 of \cite{MR3838419} to the problem \eqref{eq:heteq2-1} with $g=0$. We get $\|v_\eps - u\|_{L^\infty} \le C\eps \|f\|_{L^p}$. To estimate the difference $u_\eps - v_\eps$, we note that \eqref{eq:uvdiffW} holds, and because $p>d$, the inequality is still true if the left hand side is replaced by $\|u_\eps -v_\eps\|_{L^\infty}$.
%which still satisfies  \eqref{eq:uvdiff}. Using the technique that leads to Lemma \ref{lem:fepsH-1},  we rewrite the right hand of \eqref{eq:uvdiff} as
%\begin{equation}
%\label{eq:fepsmf}
%f(x)[m(\tfrac{x}{\eps})-1] = \eps \div\left(f (\nabla h)(\tfrac{x}{\eps})\right) - \eps \nabla f\cdot (\nabla h)(\tfrac{x}{\eps}).
%\end{equation}
%Since $\|\nabla h\|_{L^\infty(\bT^d)}$ is finite, by elliptic regularity we get
Combine the results above we get \eqref{eq:Linfrate} and finish the proof of Theorem \ref{thm:rates}.
\end{proof}

%%%%%%%%
\subsection{Uniform Lipschitz estimates}

We prove Theorem \ref{thm:Lipreg} as a direct consequence of the uniform Lipschitz regularity theory provided in Chapter 5 of \cite{MR3838419}. Applying Theorem 5.6.2 there to the equation \eqref{eq:heteq2}, we can find a constant $C>0$ depending only on the data in \eqref{eq:Ab0Hypo} and on $p,\eta$ so that
\begin{equation*}
\|\nabla u_\eps\|_{L^\infty(\Omega)} \le C\{\|g\|_{C^{1,\eta}(\partial \Omega)} + \|f_\eps\|_{L^p(\Omega)}\},
\end{equation*}
provided that $f_\eps \in L^p$ and $p>d$. Since $f\in L^p$ for some $p>d$, $m \in L^\infty(\bT^d)$ and $f_\eps(x) = f(x)m(\frac{x}{\eps})$, we have $\|f_\eps\|_{L^p} \le C\|f\|_{L^p}$ for some universal constant $C\in (0,\infty)$. We hence get \eqref{eq:Lipreg} and finish the proof of Theorem \ref{thm:Lipreg}.

%%%%%%
%%%%%%
%%%%%%%%
\section{One dimensional examples}
\label{sec:1D}

In this section we study the one-dimensional setting and make several comments on the centering condition for the drift.

\subsection{The centering condition in laminated media} As explained in Remark \ref{rem:1D}, for laminated media, the study of the invariant measures reduces to that in the one dimensional setting. 
We hence consider the following equation on the torus $\bT := \R /\Z$ (and assume $\tilde a, \tilde b$ are smooth):
\begin{equation}
\label{eq:invm1D}
(\tilde a(y)m(y))'' - (\tilde b(y)m(y))' = 0, \qquad y \in \bT.
\end{equation}
Here the prime denotes derivative in $y$. %Consider the simplest setting where $a,b$ are smooth. 
Let $\tilde m(y) := \tilde a(y)m(y)$ and rewrite the equation above as
\begin{equation*}
\tilde m''(y) - \left(\frac{\tilde b(y)}{\tilde a(y)} \tilde m(y)\right)' = 0.
\end{equation*}
Integrate this equation once, we get
\begin{equation}
\label{eq:1D-1}
\tilde m'(y) - \tilde b(y)m(y) = C_1.
\end{equation}
Integrate again, we get
\begin{equation*}
m(y) = \frac{1}{\tilde a(y)} \left\{C_0 \exp\left(\int_0^y \frac{\tilde b(s)}{\tilde a(s)}\,ds\right) + C_1 \int_0^y \exp\left(\int_{y'}^y \frac{\tilde b(s)}{\tilde a(s)} \,ds\right) dy'\right\}.
\end{equation*}
The constants $C_0$ and $C_1$ are determined by the periodicity of $\tilde m$ and by the fact that $\int_\bT m =1$.
%\begin{equation*}
%\left\{\begin{aligned}
%C_0\left(1-\exp\left(\int_0^1 a^{-1}b \right)\right) - C_1 \exp\left(\int_0^1 a^{-1}b \right)\int_0^1 \exp\left(-\int_0^{y'} a^{-1}b \right) dy' &= 0\\
%\int_0^1 \frac{1}{a(y)} \exp\left(\int_0^y\frac{b}{a}\right) \left\{C_0 + C_1 \int_0^y \exp\left(-\int_0^{y'} \frac{b}{a}\right) dy'\right\} dy &= 1.
%\end{aligned}
%\right.
%\end{equation*}
%Integrating equation \eqref{eq:invm1D} once also yields
%\begin{equation*}
%b(y)m(y) = \tilde m'(y) - C_1,
%\end{equation*}
In view of \eqref{eq:1D-1}, the \emph{centering} condition is equivalent to $C_1 = 0$, and it holds if and only if
\begin{equation*}
\int_{\bT^1} \frac{\tilde b(y)}{\tilde a(y)}\,dy = 0.
\end{equation*}
The invariant measure is then given by
\begin{equation*}
m(y) =\frac{1}{\tilde a(y)} \exp\left(\int_0^y \frac{\tilde b(s)}{\tilde a(s)}\,ds \right)\Big\slash \int_0^1 \frac{1}{\tilde a(y)}\exp\left(\int_0^y \frac{\tilde b(s)}{\tilde a(s)}\,ds \right)\,dy.
\end{equation*}
The above also verifies that, for laminated media in higher dimensions, the centering condition is precisely \eqref{eq:1Dcenter}.

\subsection{Some comments}

To check the sharpness of Theorem \ref{thm:Lipreg} in terms of the order of regularity, we consider the 1D equation 
\begin{equation}
\label{eq:1D-2}
-\tilde a(\tfrac{x}{\eps})\left(u_\eps\right)'' - \tfrac1\eps \tilde b(\tfrac{x}{\eps}) \left(u_\eps\right)' = 0 \;\,\text{ in }\, (0,1),
\end{equation}
where $\tilde b$ is of the form $\tilde b(y) = \tilde a(y)b(y)$. In view of \eqref{eq:1Dcenter}, the centering condition is $\int_{\bT} b = 0$. For example, set $b(y) =\cos(2\pi y)$; we compute and get
\begin{equation*}
m(y) = \frac{\tilde m(y)}{\tilde a(y)}, \qquad \tilde m(y) = C_0\exp\left(\frac{\sin(2\pi y)}{2\pi}\right)
\end{equation*}
with some normalization constant $C_0 > 0$. Multiply on both sides of \eqref{eq:1D-2} by $m(\frac{x}{\eps})$. Then we get
\begin{equation*}
-\left(\tilde m(\tfrac{x}{\eps}) u'_\eps(x)\right)' = 0.
\end{equation*}
The unique solution $u_\eps$ with boundary data $u_\eps(0)=0$ and $u_\eps(1) = 1$ then satisfies
\begin{equation*}
u'_\eps(x) = \frac{c_\eps}{\tilde  m(\tfrac{x}{\eps})}, \quad \frac{1}{c_\eps} = \int_0^1 \tilde m^{-1}(\tfrac{x}{\eps})\,dx.
\end{equation*}

This simple one dimensional example shows that, for $\tilde b \ne 0$ under the centering condition, one cannot expect to have uniform in $\eps$ regularity that is higher (smoother) than Lipschitz in general. Compare this with the case of $\tilde b = 0$. The simple 1D equation at the beginning with boundary condition $u_\eps(0) =0$ and $u_\eps(1) = 1$ then has smooth solution.

%%%%%%%
\medskip

Next, we comment on the necessity of the centering condition. It is known already in \cite{MR503330} that when the centering condition fails one cannot expect to have a homogenization result like Theorem \ref{thm:homog}. As a simple example in 1D, consider the problem 
\begin{equation}
\left(u_\varepsilon\right)''+\tfrac{1}{\varepsilon}\left(u_\varepsilon\right)' = f \quad \text{in } (0,1), \quad \text{and} \quad 
u_\varepsilon(0)=u_\varepsilon(1)=0,
\end{equation}
and $f(x) \equiv 1$. It is clear that the invariant measure is $m(y) \equiv 1$ on $\bT$, and the periodic drift vector $b(y) \equiv 1$ fails the centering condition. Direct computation then shows 
$$
u_\varepsilon(x) = \eps\,\left(x- \frac{1-e^{-x/\eps}}{1-e^{-1/\eps}}\right).%\frac{2\varepsilon}{e^{1/\varepsilon}-e^{-1/\varepsilon}}e^{-x/\varepsilon}+\varepsilon x-\frac{e^{1/\varepsilon}+e^{-1/\varepsilon}}{e^{1/\varepsilon}-e^{-1/\varepsilon}}\varepsilon.
$$
It follows that $u_\eps \to u$ uniformly in $[0,1]$ where $u(x) \equiv 0$. Clearly $u$ cannot be a solution to a uniformly elliptic equation with right hand side $f \equiv 1$.  

%%%%%%%%
\section{Concluding remarks}
\label{sec:conclusions}

In this paper, we studied quantitative homogenization of uniformly elliptic equations with a periodic diffusion matrix and a large drift term. We show that when the drift satisfies the centering condition \eqref{eq:bcenter}, the equation can be transformed to divergence form without any drift. We can then transfer almost all of the recently developed sharp quantitative estimates, including convergence rates in various norms and uniform Lipschitz regularity results, to the setting of this paper. We also comment on the necessity of the centering condition and on the sharpness of the results.

Our method is quite flexible. For example, one may consider the more general equation
\begin{equation*}
-\tilde a_{ij}(\tfrac{x}{\eps}) \partial_i\partial_{j} u_\eps - \tfrac{1}{\eps} \tilde b_j(\tfrac{x}{\eps}) \partial_j u_\eps - \tilde c_j(\tfrac{x}{\eps})\partial_j u_\eps + \tfrac{1}{\eps} q_1(\tfrac{x}{\eps}) u_\eps + q_0(\tfrac{x}{\eps}) u_\eps = f,
\end{equation*}
say, for $u_\eps \in H^1_0(\Omega)$. Qualitative theory for the above equation without the large potential was treated already in \cite{MR503330}. The large potential case in divergence form without the drifts was considered by Zhang \cite{MR4260009}; a combination of the technique there with our method can be used for the problem above. As long as $O(\eps^{-2})$ potential is considered, see e.g.\,Allaire and Orive \cite{MR2351401}, the large potential will affect the homogenization at the highest order and more involved transformation or analysis will be needed. We leave it to future studies. Note also, since the key transformation in this paper does not involve the boundary conditions, we expect that our method continues to work for Neumann boundary problems of \eqref{eq:heteq} (for nonzero data, this amounts to oscillatory Neumann data). Our method is hence a clear improvement of the classical approaches which are restricted, as remarked in \cite{MR503330}, to homogeneous Dirichlet boundary conditions.

\section*{Acknowledgments}
The authors are grateful to Hung V.\,Tran for helpful discussions on properties of invariant measures. They would like also to thank the anonymous referees for their valuable suggestions which helped the authors improve the quality of the paper.

%%%%%%%%%%%%
% First use the following two lines to produce .bbl file
\bibliographystyle{plain}{}
\bibliography{/Users/wenjiajing/RefAllTime}
%
% Finally use the following line to input the references
%
%\input{QuantHomog_NonDiv.bbl}

\end{document}